\documentclass[11pt]{amsart}
\usepackage{fullpage}
\usepackage{amsmath, amssymb, amsthm}
\usepackage{comment}
\usepackage{graphicx}
\graphicspath{{./figs_ARM/}} 

\usepackage{color}
\usepackage[colorlinks=true,linkcolor=red,citecolor=blue,urlcolor=cyan]{hyperref}

\usepackage[ruled,vlined]{algorithm2e}
\usepackage{cleveref}

\newtheorem{thm}{Theorem}[section]
\newtheorem{cor}[thm]{Corollary}
\newtheorem{prop}[thm]{Proposition}
\newtheorem{lem}[thm]{Lemma}

\theoremstyle{remark}
\newtheorem{rem}[thm]{Remark}

\newcommand{\Em}{E}
\newcommand{\Var}{\mathrm{Var}}
\newcommand{\ubar}{\overline{u}_\rho}

\newcommand{\SAM}[1]{\textcolor{red}{\small {\sf SAM: #1}}}

\begin{document}
\title{Extremal Eigenvalues of Weighted Steklov Problems }\thanks{C.K. acknowledges support from NSF DMS-2208373 and DMS-2513176. }

\author{Chiu-Yen Kao}
\address{Department of Mathematical Sciences, Claremont McKenna College, Claremont, CA 91711}
\email{ckao@cmc.edu}

\author{Seyyed Abbas Mohammadi}
\address{Division of Mathematics, University of Dundee, Dundee DD1 4HN, United Kingdom; School of Computer Science and Applied Mathematics, University of The Witwatersrand, Braamfontein, 2000, Johannesburg, South Africa}
\email{smohammadi001@dundee.ac.uk;abbas.mohammadi@wits.ac.za}

\subjclass[2020]{35P15,  
49M05,  
49R05,  
65K10,  
65N25,  
35J25,  
}

\keywords{
Steklov eigenvalue problem, 
spectral optimization, 
non-convex optimization, 
variational methods, 
Fréchet differentiability, 
numerical algorithms}

\begin{abstract}
We study the optimization of Steklov eigenvalues with respect to a boundary density function $\rho$ on a bounded Lipschitz domain $\Omega \subset \mathbb{R}^N$. We investigate the minimization and maximization of $\lambda_k(\rho)$, the $k$th Steklov eigenvalue, over admissible densities satisfying pointwise bounds and a fixed integral constraint. Our analysis covers both first and higher-order eigenvalues and applies to domains 
$\Omega$ with general geometry and topology. We establish the existence of optimal solutions and provide structural characterizations: minimizers are bang--bang functions and may have disconnected support, while maximizers are not necessarily bang--bang.  
On circular domains, the minimization problem admits infinitely many minimizers generated by rotational symmetry, while the maximization problem has infinitely many distinct maximizers that are not symmetry-induced. We also show that the maps $\rho \mapsto \lambda_k(\rho)$ and $\rho \mapsto 1/\lambda_k(\rho)$ are generally neither convex nor concave, limiting the use of classical convex optimization tools.  
To address these challenges, we analyze the objective functional and introduce a Fréchet differentiable surrogate that enables the derivation of optimality conditions. We further design an efficient numerical algorithm, with experiments illustrating the difficulty of recovering optimal densities when they lack smoothness or exhibit oscillations.
\end{abstract}

\maketitle

\section{Introduction}

Optimizing eigenvalues of elliptic operators is a PDE-constrained design task with many real applications.
Examples include lowering the fundamental frequency of membranes and plates made of two materials under area or volume constraints \cite{antunes2013convex,chanillo2000symmetry,cox1991two,davoli2023spectral,krein1955,mohammadi2016extremal,mohammadi2019optimal};
reducing heat loss in systems with different thermal conductivities \cite{conca2012minimization,cox1996extremal,kang2020minimization,mohammadi2014optimal};
designing photonic crystals with chosen optical properties \cite{kao2005maximizing, kao2025semi};
optimizing the survival of a species in a habitat \cite{hintermuller2012principal,kao2008principal,mazari2022shape};
and decreasing the ground-state energy in nanostructures \cite{antunes2018nonlinear,mohammadi2012shape,mohammadi2016minimization1}, among others.
For overviews and further applications, see \cite{antunes2025optimization,bozorgnia2015optimal,bozorgnia2024infinity,bucur2005variational,henrot2006extremum,henrot2017shape,osting2014minimal,oudet2021computation,sayed2021maximization}.

The Steklov eigenvalue problem is an elliptic spectral problem, and its optimization has received considerable attention: it plays a role in spectral geometry \cite{colbois2024some,Girouard_2017}, is linked to free boundary minimal surfaces \cite{fraser2016sharp}, and—since the Steklov spectrum coincides with that of the Dirichlet-to-Neumann operator—arises in physical models and inverse problems such as electrical impedance tomography and cloaking \cite{kuznetsov2014legacy}.

In this paper, we study the optimization of Steklov eigenvalues with respect to boundary densities.
Let $\Omega\subset\mathbb{R}^N$, $N\ge2$, be bounded with Lipschitz boundary. For a boundary density $\rho\in L^\infty(\partial\Omega)$ with $\rho\ge0$ and $\rho\not\equiv0$, the weighted Steklov problem is
\begin{equation}\label{eq:mainpde}
\begin{cases}
\Delta u=0 & \text{in } \Omega,\\
\displaystyle \frac{\partial u}{\partial \mathbf n}=\lambda\,\rho\,u & \text{on } \partial\Omega,
\end{cases}
\end{equation}
where $\mathbf n$ is the outer unit normal. Equivalently,
\begin{equation}\label{eq:varform}
\int_\Omega \nabla u\cdot\nabla\phi\,d\mathbf x
= \lambda \int_{\partial\Omega}\rho\,u\,\phi\,dS
\qquad\text{for all }\phi\in H^1(\Omega),
\end{equation}
with $dS$ the surface measure  and boundary integrals are understood in the trace sense.


The Steklov spectrum is discrete and we list the eigenvalues (with multiplicity) as
\[
0=\lambda_0(\rho)<\lambda_1(\rho)\le\lambda_2(\rho)\le\cdots\to\infty.
\]
Let \(u_k:=u_{k,\rho}\) be an eigenfunction for \(\lambda_k(\rho)\).
Then the boundary traces \(\{u_{k,\rho}|_{\partial\Omega}\}_{k=0}^\infty\) form a complete
orthonormal system in the weighted space \(L^2(\partial\Omega;\rho\,dS)\), i.e.
\begin{equation}\label{e:rhoOrthonarmality}
  \int_{\partial\Omega}\rho\,u_{i,\rho}\,u_{j,\rho}\,dS=\delta_{ij}.
\end{equation}
In particular, the first eigenfunction \(u_{0,\rho}\) corresponding to \(\lambda_0(\rho)\) is constant.

The eigenvalues admit the variational characterization
\begin{equation}\label{eq:lambdavarform1}
  \lambda_{k}(\rho)
  = \min_{\substack{E_{k}\subset H^1(\Omega)\\ \dim E_{k}=k,\ E_k\subset H^1_{\rho,0}}}
    \ \max_{0\neq v \in E_{k}}
    \frac{\int_{\Omega}\left|\nabla v\right|^{2}\,d\mathbf{x}}{\int_{\partial\Omega}\rho\, v^{2}\,dS},
  \qquad k=1,2,\dots
\end{equation}
where
\[
H^1_{\rho,0}:=\Bigl\{\,v\in H^1(\Omega):\ \int_{\partial\Omega} \rho\, v\, dS = 0\,\Bigr\}.
\]
Equivalently, defining
\begin{equation}\label{eq:Akrho}
  \mathcal{A}^k_\rho := \Bigl\{\,v\in H^1(\Omega):\ v\neq 0,\ \int_{\partial\Omega}\rho\, v\, u_{j,\rho}\,dS=0\ \text{for } 0\le j\le k-1\,\Bigr\},
\end{equation}
we also have
\begin{equation}\label{eq:lambdavarform2}
  \lambda_{k}(\rho)
  = \min_{v\in\mathcal{A}^k_\rho}
    \frac{\int_{\Omega}\left|\nabla v\right|^{2}\,d\mathbf{x}}{\int_{\partial\Omega}\rho\, v^{2}\,dS},
  \qquad k=1,2,\dots,
\end{equation}
see, e.g., \cite[p.~97]{bandle1980isoperimetric} and \cite[Section~7.3]{henrot2006extremum}.

When $\rho\equiv 1$, the Steklov spectrum coincides with the spectrum of the Dirichlet-to-Neumann operator $\Gamma \colon H^{\frac1 2}(\partial \Omega) \to H^{- \frac 1 2}(\partial \Omega)$,  given by the formula $\Gamma w =  \partial_ {\mathbf{n}} ( \mathcal H w)$, where $\mathcal H w$ denotes the unique harmonic extension of $ w \in H^{\frac1 2}(\partial \Omega)$ to $\Omega$. The restrictions of the Steklov eigenfunctions to the boundary, 
$\{ u_j |_{\partial \Omega} \}_{j=0}^\infty \subset {C^\infty(\partial \Omega)}$, 
form   a complete orthonormal basis of $L^2(\partial \Omega)$. Discussions on recent developments on the Steklov eigenvalue problems can be found in  \cite{colbois2024some,Girouard_2017}. 

Lately, study of maximal Steklov eigenvalues and their properties attracts a lot of attention, see for example \cite{colbois2024some}. One of the problems that was studied intensively is to find upper bounds for   Steklov eigenvalues. 
It was proved by Weinstock in 1954~\cite{Weinstock_1954} that, among simply connected planar domains and for densities \( \rho \) with fixed mass \( \int_{\partial \Omega} \rho \, dS = \gamma \), the first Steklov eigenvalue satisfies \( \lambda_{1}(\rho) \leq 2\pi / \gamma \), with equality attained when \( \Omega \) is a disk, and \( \rho \) is constant.
 Furthermore, the first Steklov eigenvalue of the disk with the constant density $\rho \equiv 1$ has multiplicity two and $\lambda_1=\lambda_2 = 1/R$ where $R$ is the radius of the disk.  Later Girouard and Polterovich \cite{girouard2010} proved that, among simply connected planar domains, $\lambda_{k}({\rho}) \le 2\pi k/\gamma$  and the bound is sharp and attained by a sequence of simply connected domains degenerating into a disjoint union of $k$ identical balls. Among the rotationally symmetric annuli,  $\lambda_{1}({\rho}) \gamma\le \tilde{\Lambda} \approx10.47478$ with the equality attained on an annulus with an inner radius $r_{s}\approx 0.090776$, an outer radius $r = 1$, and constant ratio of density $\rho_s/\rho_1 \approx 11.01609$ where $\rho_s$ and $\rho_1$ are density functions on the inner and outer radii, respectively \cite{oudet2021computation}. 

Fix $k\ge 1$. We study
\begin{equation}\label{eq:minp}
  \min_{\rho\in\mathcal M}\ \lambda_k(\rho),
\end{equation}
\begin{equation}\label{eq:maxp}
  \max_{\rho\in\mathcal M}\ \lambda_k(\rho),
\end{equation}
where
\[
\mathcal M=\Bigl\{\rho\in L^\infty(\partial\Omega):\ \alpha\le \rho\le \beta\ \text{a.e. on }\partial\Omega,\ 
\int_{\partial\Omega}\rho\,dS=\gamma\Bigr\},
\]
with fixed parameters $0<\alpha<\beta$ and $\alpha\,|\partial\Omega|<\gamma<\beta\,|\partial\Omega|$. 
Here $|\cdot|$ denotes Lebesgue measure on $\Omega$ and surface (Hausdorff) measure on $\partial\Omega$.

It is well known that $\mathcal{M}$ is the weak-$^\ast$ closure of the set of step (bang--bang) functions
\[
\mathcal{N}
=\Bigl\{\rho\in L^\infty(\partial\Omega):\ \rho=\alpha+(\beta-\alpha)\chi_D,\ D\subset\partial\Omega,\ |D|=A\Bigr\},
\]
where
\begin{equation}\label{eq:Aformula}
    A=\frac{\gamma-\alpha\,|\partial\Omega|}{\beta-\alpha}.
\end{equation}
Moreover, $\mathcal{M}$ is weak-$^\ast$ compact and convex, and its extreme points are precisely the functions in $\mathcal{N}$; see \cite{friedland1977extremal,henrotpierre2018shape,mazari2024qualitative}.

The Steklov eigenvalue problem appears in models where a quantity (such as mass, heat, or charge) is concentrated on the boundary rather than inside the domain.
A physical interpretation of optimization problems \eqref{eq:minp}-\eqref{eq:maxp} arises in the analysis of elastic membranes, see \cite[Page 95]{bandle1980isoperimetric} and \cite{lamberti2015viewing}. Assume we want to build a membrane of prescribed shape $\Omega$ with  total mass $\gamma$  where the mass is concentrated on the boundary. Moreover, the membrane on the boundary is made out of two different materials with densities $\alpha$ and $\beta$.  Solutions of \eqref{eq:minp}-\eqref{eq:maxp} provide a pattern for distributing these materials on the boundary in a such a way that  frequencies of the resulting membrane are optimal. Optimizing frequencies of membranes with different boundary conditions have been studied by several authors, see \cite{chanillo2000symmetry, henrot2006extremum, kao2013efficient, mohammadi2019optimal}, to name just a few.


While most prior works focus on maximizing Steklov eigenvalues, particularly the first eigenvalue, significantly less attention has been paid to minimization problems or higher-order eigenvalues. 
In contrast, this paper makes the following contributions: 1) We study both the {minimization and maximization} of the quantity \( \lambda_k(\rho)\gamma \) over a class of bounded domains with Lipschitz boundaries, with no assumptions of convexity or simple connectivity on the domain $\Omega$ and no convexity or concavity assumption on the functional with respect to $\rho$.  2) We provide numerical and analytical evidence that the Steklov eigenvalues \( \lambda_k(\rho) \) and their reciprocals \( 1/\lambda_k(\rho),\) $k\geq 1$, are generally neither concave nor convex over the admissible set \( \mathcal{M} \). This highlights limitation of classical convex optimization tools and justifies the need for tailored optimization algorithms. This part of our study was motivated by related results in \cite[ Theorem 9.1.3]{henrot2006extremum}.
3) We establish the existence of optimal solutions and study their structure. In particular, we show that there are  bang–bang minimizers and we find some of  the topological properties of the high–density set for the minimizers. For the maximization problem, the optimal density is not necessarily bang–bang; on the circle we in fact find many maximizers with distinct profiles. 4) Although the eigenvalues are not differentiable in general, we introduce a differentiable surrogate functional and use it to derive the optimality conditions. We implement a numerical scheme consistent with the derived optimality conditions.  5) Our approach integrates both analytical insight and computational techniques, making it applicable to a broad range of geometries where traditional tools fail. 6) Finally, to the best of our knowledge, the \emph{minimization} of \( \lambda_k(\rho)\gamma \) over boundary densities with fixed mass and pointwise bounds \( \alpha \le \rho \le \beta \) on general Lipschitz domains has not been investigated before; previous studies have focused mainly on maximization, shape optimization, or related Robin and nonlinear Steklov problems, see for example \cite{colbois2024some,emamizadeh2011rearrangements,mazari2022qualitative}.

In Section~\ref{sec:pre}, we investigate analytical properties of the Steklov eigenvalue \(\lambda_k(\rho)\), including continuity and differentiability with respect to the density function \(\rho\). Section~\ref{sec:convexity} addresses the (non-)convexity of the objective functional, a property that significantly impacts the choice of optimization methods. 
Section~\ref{sec:minprob} is devoted to the minimization problem: we prove the existence of a bang-bang optimizer and analyze its topological and geometric structure. In Section~\ref{sec:maxprob}, we turn to the maximization problem, establish existence of an optimal solution, and identify it explicitly when the domain is a disk and $k=1$. Section~\ref{sec:numeric} presents numerical algorithms designed to solve the optimization problems and the numerical results, and we conclude in Section~\ref{sec:conc} with a summary of findings and future directions.

\section{Properties of the functional $\lambda_k(\rho)$ and eigenfunction $u_{k,\rho}$}\label{sec:pre}

In this section, we study the eigenvalues as a functional over the set $\mathcal{M}$ and establish regularity results for the corresponding eigenfunction $u_k$.  

We begin by presenting regularity results for the eigenfunctions. The following lemma is needed, and its proof can be found in \cite{griepentrog2001linear} and  \cite[Theorem 5.8]{manzoni2021optimal}.
\begin{lem}\label{lem:reglem}
Let $\Omega\subset\mathbb{R}^N$ be a bounded Lipschitz domain with $N\ge2$. 
Suppose $u\in H^1(\Omega)$ is the weak solution of
\begin{equation}\label{eq:fgpde}
    \begin{cases}
		\Delta u + u = f & \text{in } \Omega, \\
		\frac{\partial u}{\partial \mathbf{n}} = g & \text{on } \partial \Omega,
	\end{cases}
\end{equation}
that is,
\[
\int_{\Omega} \nabla u\cdot \nabla \phi \, d\mathbf{x} 
+ \int_{\Omega} u \phi \, d\mathbf{x} 
= \int_{\Omega} f \phi \, d\mathbf{x} 
+ \int_{\partial\Omega} g \phi \, dS, 
\qquad \forall \phi\in H^{1}(\Omega).
\]
If $f\in L^p(\Omega)$ with $p>\tfrac N2$ and $g\in L^q(\partial\Omega)$ with $q> N-1$, then $u\in C(\overline{\Omega})$ and
\[
\|u\|_{C(\overline{\Omega})} \leq C \left( \|f\|_{L^p(\Omega)} + \|g\|_{L^q(\partial\Omega)} \right),
\]
where $C$ depends on $\Omega$, $p$, and $q$.
\end{lem}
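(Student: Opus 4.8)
\medskip\noindent\textbf{Proof plan for Lemma~\ref{lem:reglem}.}
The plan is to obtain a quantitative $L^\infty(\Omega)$ bound by a Moser/Stampacchia iteration and then upgrade it to continuity up to the boundary via the De Giorgi--Nash--Moser theory, treating the interior datum $f$ and the boundary datum $g$ separately. First, the right-hand side of the variational identity defines a bounded linear functional on $H^1(\Omega)$: the interior term $\int_\Omega f\phi\,d\mathbf x$ is controlled because $p>N/2$ forces $p'<N/(N-2)$, so $H^1(\Omega)\hookrightarrow L^{p'}(\Omega)$, and the boundary term $\int_{\partial\Omega}g\phi\,dS$ is controlled because $q>N-1\ge1$ gives, via the trace, $H^1(\Omega)\hookrightarrow L^{q'}(\partial\Omega)$. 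Hence, by coercivity of $a(u,\phi):=\int_\Omega\nabla u\cdot\nabla\phi\,d\mathbf x+\int_\Omega u\phi\,d\mathbf x$ and Lax--Milgram, a unique weak solution $u\in H^1(\Omega)$ exists with $\|u\|_{H^1(\Omega)}\le C(\|f\|_{L^p(\Omega)}+\|g\|_{L^q(\partial\Omega)})$. By linearity write $u=v+w$, where $v$ solves \eqref{eq:fgpde} with data $(f,0)$ and $w$ with data $(0,g)$; it then suffices to bound $v$ and $w$ in $C(\overline\Omega)$ by $\|f\|_{L^p(\Omega)}$ and $\|g\|_{L^q(\partial\Omega)}$ respectively and add.

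For the $L^\infty$ estimate on $v$, test the equation with (suitably truncated) powers $|v|^{2\gamma-1}\operatorname{sgn}v$, $\gamma\ge1$: the left-hand side then dominates $\|\,|v|^\gamma\,\|_{H^1(\Omega)}^2$, and the right-hand side $\int_\Omega f\,|v|^{2\gamma-1}\,d\mathbf x$ is bounded using Hölder and $H^1(\Omega)\hookrightarrow L^{2N/(N-2)}(\Omega)$ (any finite exponent if $N=2$); the hypothesis $p>N/2$ is precisely what makes the resulting chain of exponents summable, and Moser iteration yields $\|v\|_{L^\infty(\Omega)}\le C(\Omega,p)\,\|f\|_{L^p(\Omega)}$. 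The argument for $w$ is the same except that the right-hand side is the co-normal term $\int_{\partial\Omega}g\,|w|^{2\gamma-1}\,dS$, estimated by Hölder on $\partial\Omega$ and the trace embedding $H^1(\Omega)\hookrightarrow L^{2(N-1)/(N-2)}(\partial\Omega)$; here $q>N-1$ plays the role of the Stampacchia exponent on the $(N-1)$-dimensional boundary and again closes the iteration, giving $\|w\|_{L^\infty(\Omega)}\le C(\Omega,q)\,\|g\|_{L^q(\partial\Omega)}$.

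To pass from $L^\infty$ to continuity on the merely Lipschitz domain $\Omega$ — where $W^{2,p}$ and Schauder theory are unavailable — I would localize and flatten $\partial\Omega$ and apply De Giorgi--Nash--Moser: the interior part $v$ solves $-\Delta v+v=f$ with $f\in L^p$, $p>N/2$, and homogeneous co-normal data, hence is Hölder continuous up to $\partial\Omega$; the boundary part $w$ solves a homogeneous equation near $\partial\Omega$ with $L^q$ co-normal datum, $q>N-1$, which likewise falls within this framework after flattening and local reflection. Adding, $u=v+w\in C(\overline\Omega)$, and combining the two $L^\infty$ bounds gives $\|u\|_{C(\overline\Omega)}\le C(\|f\|_{L^p(\Omega)}+\|g\|_{L^q(\partial\Omega)})$ with $C=C(\Omega,p,q)$. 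The same conclusions can be reached within the Morrey--Campanato regularity scale for $-\Delta+1$ under co-normal boundary conditions, which is the route of \cite{griepentrog2001linear}; see also \cite[Theorem~5.8]{manzoni2021optimal}.

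The main obstacle is the coupling of the boundary datum with the low regularity of $\partial\Omega$: one must carry the co-normal term $\int_{\partial\Omega}g\phi\,dS$ through the Moser iteration and verify that $q>N-1$ is exactly the threshold that the relevant trace inequalities require, and one must secure continuity up to a Lipschitz boundary using only localize-and-flatten arguments and the De Giorgi--Nash--Moser machinery rather than classical elliptic regularity.
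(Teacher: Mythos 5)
The paper does not actually prove Lemma~\ref{lem:reglem}: it is stated as a known result and the ``proof'' consists of citations to \cite{griepentrog2001linear} and \cite[Theorem~5.8]{manzoni2021optimal}, which work in the Morrey--Campanato/Sobolev--Campanato regularity scale for conormal problems on Lipschitz domains. Your proposal is therefore a genuinely different (and more self-contained) route: splitting $u=v+w$ by linearity, running a Stampacchia/Moser iteration on each piece with the exponent bookkeeping driven by $H^1(\Omega)\hookrightarrow L^{2N/(N-2)}(\Omega)$ for the interior datum and the trace embedding $H^1(\Omega)\hookrightarrow L^{2(N-1)/(N-2)}(\partial\Omega)$ for the boundary datum, and then invoking De Giorgi--Nash--Moser for continuity up to the boundary. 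The exponent thresholds you identify ($p>N/2$ for the interior term, $q>N-1$ for the boundary term) are exactly the ones the statement requires, and the preliminary Lax--Milgram bound and the decomposition are sound. What your approach buys is transparency about \emph{why} those exponents appear; what the paper's citation buys is that the delicate boundary step is outsourced to a reference that has already done it on Lipschitz domains. The one step of your sketch that deserves more care is the phrase ``local reflection'': classical even reflection is not available across a merely Lipschitz boundary, so you should either (a) flatten by a bi-Lipschitz chart first --- which converts the problem to a divergence-form operator with bounded measurable coefficients and a flat boundary portion, where even reflection for the natural (conormal) boundary condition is legitimate at the level of the weak formulation, with $g$ becoming a surface-supported source --- or (b) use boundary Caccioppoli inequalities and the De Giorgi class machinery directly at the boundary, which is essentially what \cite{griepentrog2001linear} does. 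With that clarification your plan closes; also note you only need $N=2,3$ for the application in Lemma~\ref{lem:requk}, where the $N=2$ case requires replacing $L^{2N/(N-2)}$ by an arbitrary finite exponent, as you indicate.
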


Now, we are ready to state the following lemma on the regularity of the eigenfunction \( u_k \).
\begin{lem}\label{lem:requk}
Let $\Omega\subset\mathbb{R}^N$ be a bounded Lipschitz domain and fix $k\ge 1$. The eigenfunction $u_k$ of \eqref{eq:mainpde} belongs to $C^\infty(\Omega)$. Moreover, if $N\le 3$, then
$u_k \in C(\overline{\Omega})$, and
\[
\|u_k\|_{C(\overline{\Omega})} \le C \left( \|u_k\|_{L^2(\Omega)} + \|u_k\|_{L^2(\partial\Omega)} \right),
\]
where constant $C$ depends on $\Omega$ and $p$.
\end{lem}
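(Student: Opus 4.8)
The plan is to treat interior and boundary regularity separately. For the interior, I would first observe that testing the weak formulation \eqref{eq:varform} against $\phi\in C_c^\infty(\Omega)$ annihilates the boundary integral, so $\int_\Omega\nabla u_k\cdot\nabla\phi\,d\mathbf x=0$ for all such $\phi$; that is, $u_k$ is weakly harmonic in $\Omega$. By Weyl's lemma (equivalently, interior elliptic regularity for the Laplacian) $u_k$ is real-analytic in $\Omega$, in particular $u_k\in C^\infty(\Omega)$. This step requires no restriction on $N$.

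For continuity up to $\overline\Omega$ when $N\le 3$, the idea is to recast \eqref{eq:mainpde} in the form \eqref{eq:fgpde} and apply Lemma~\ref{lem:reglem}. Writing $\Delta u_k+u_k=u_k$ in $\Omega$ and $\partial_{\mathbf n}u_k=\lambda_k\,\rho\,u_k$ on $\partial\Omega$, I set $f:=u_k$ and $g:=\lambda_k\,\rho\,u_k$. Since $u_k\in H^1(\Omega)$, the Sobolev embedding gives $u_k\in L^{2N/(N-2)}(\Omega)$ for $N\ge 3$ (and $u_k\in L^p(\Omega)$ for every finite $p$ when $N=2$); a one-line computation shows $2N/(N-2)>N/2$ precisely when $N<6$, so $f\in L^p(\Omega)$ for some $p>N/2$ whenever $N\le 3$. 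For the boundary datum, the trace–Sobolev inequality gives $u_k|_{\partial\Omega}\in L^{2(N-1)/(N-2)}(\partial\Omega)$ for $N\ge 3$ (every finite exponent when $N=2$), and $2(N-1)/(N-2)>N-1$ exactly when $N<4$; together with $\rho\in L^\infty(\partial\Omega)$ this yields $g\in L^q(\partial\Omega)$ for some $q>N-1$ when $N\le 3$. Lemma~\ref{lem:reglem} then gives $u_k\in C(\overline\Omega)$ with
\[
\|u_k\|_{C(\overline\Omega)}\le C\bigl(\|u_k\|_{L^p(\Omega)}+\lambda_k\beta\,\|u_k\|_{L^q(\partial\Omega)}\bigr).
\]

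To obtain the stated estimate it remains to replace the $L^p$ and $L^q$ norms by $L^2$ norms. I would bound both sides above by $\|u_k\|_{H^1(\Omega)}$ using the same Sobolev embedding $H^1(\Omega)\hookrightarrow L^p(\Omega)$ and trace–Sobolev embedding $H^1(\Omega)\hookrightarrow L^q(\partial\Omega)$, and then control the $H^1$ norm via the energy identity obtained by taking $\phi=u_k$ in \eqref{eq:varform}:
\[
\int_\Omega|\nabla u_k|^2\,d\mathbf x=\lambda_k\int_{\partial\Omega}\rho\,u_k^2\,dS\le\lambda_k\beta\,\|u_k\|_{L^2(\partial\Omega)}^2 .
\]
Hence $\|u_k\|_{H^1(\Omega)}^2\le\|u_k\|_{L^2(\Omega)}^2+\lambda_k\beta\,\|u_k\|_{L^2(\partial\Omega)}^2$, which yields $\|u_k\|_{C(\overline\Omega)}\le C\bigl(\|u_k\|_{L^2(\Omega)}+\|u_k\|_{L^2(\partial\Omega)}\bigr)$ with $C$ depending only on $\Omega$, $N$, $\beta$, and $\lambda_k$.

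The argument is essentially routine; the only delicate point is the bookkeeping of Sobolev exponents. I expect the restriction $N\le 3$ to be forced precisely by the boundary integrability requirement $q>N-1$ of Lemma~\ref{lem:reglem}: the trace of $H^1(\Omega)$ just fails to lie in any $L^q(\partial\Omega)$ with $q>N-1$ once $N\ge 4$, so extending continuity to higher dimensions would require a bootstrap exploiting the harmonicity of $u_k$ rather than the single application of Lemma~\ref{lem:reglem} used here.
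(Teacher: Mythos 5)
Your proof is correct, and for the interior statement it coincides with the paper's (Weyl's lemma versus a citation of interior elliptic regularity is a cosmetic difference). Where you genuinely diverge is the $N=3$ case of continuity up to the boundary. The paper first invokes Nittka's $L^p$-regularity lemma to conclude $u_k\in L^p(\Omega)$ and $u_k\in L^p(\partial\Omega)$ for \emph{every} $p>1$, together with the estimate $\|u_k\|_{L^p(\Omega)}+\|u_k\|_{L^p(\partial\Omega)}\le C(\|u_k\|_{L^2(\Omega)}+\|u_k\|_{L^2(\partial\Omega)})$, and only then applies Lemma~\ref{lem:reglem} with some $p=q>2$. You instead get the required integrability directly from the Sobolev embedding $H^1(\Omega)\hookrightarrow L^{6}(\Omega)$ and the critical trace embedding $H^1(\Omega)\hookrightarrow L^{4}(\partial\Omega)$, then close the estimate with the energy identity $\int_\Omega|\nabla u_k|^2=\lambda_k\int_{\partial\Omega}\rho\,u_k^2\le\lambda_k\beta\|u_k\|_{L^2(\partial\Omega)}^2$. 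Your exponent bookkeeping is right ($6>3/2$ and $4>2$), and the trace embedding you use is valid on Lipschitz domains, so the argument is complete and arguably more self-contained, at the price of a constant that visibly depends on $\lambda_k$ and $\beta$ --- though the paper's constant has the same hidden dependence through $g=\lambda_k\rho u_k$. One small caveat on your closing remark: the restriction $N\le3$ is forced by \emph{your} single application of Lemma~\ref{lem:reglem}, but it is not obviously intrinsic --- the paper's route through the $L^p$-bootstrap of Nittka is exactly the kind of mechanism that could relax it, so the dichotomy you draw (single application versus bootstrap) is accurate but the paper already sits on the bootstrap side.
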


\begin{proof}
The first assertion is standard; see, for instance, \cite[Corollary 8.11]{gilbarg2015elliptic}.

For $N=2$, since $u_k\in H^1(\Omega)$, we have $u_k\in L^2(\Omega)$ and its trace belongs to $L^2(\partial\Omega)$. Setting $f=u_k$ and $g=\lambda_k\rho\,u_k$ in Lemma~\ref{lem:reglem}, we observe that $u_k$ satisfies \eqref{eq:fgpde}; hence $u_k\in C(\overline{\Omega})$, and the stated estimate follows.

For $N=3$, by \cite[Lemma 3.11]{nittka2011regularity}, 
we have $u_k\in L^p(\Omega)$ and  $u_k\in L^p(\partial\Omega)$ for all $p>1$. In particular, \cite[Lemma 3.11]{nittka2011regularity} yields that
\begin{equation}\label{ineq:lp2}
    \|u_k\|_{L^p(\Omega)}+  \|u_k\|_{L^p(\partial\Omega)} \leq C \left( \|u_k\|_{L^2(\Omega)}+  \|u_k\|_{L^2(\partial\Omega)}\right),
\end{equation}
where $C$ does not depend on $u_k$.

Taking $p>2$ and again setting $f=u_k$ and $g=\lambda_k\rho\,u_k$ in Lemma~\ref{lem:reglem}, we conclude that $u_k\in C(\overline{\Omega})$ and obtain the inequality in Lemma ~\ref{lem:reglem} for $ p=q>2$. Now, in view of  \eqref{ineq:lp2}, we obtain the assertion of this lemma.

\end{proof}


Next, we show the {continuity and differentiability of $\lambda_k(\rho)$.}
Recall the orthonormality \eqref{e:rhoOrthonarmality}. 
\begin{lem}\label{lem:weakcon}
Fix $k\ge 1$. Set
\[
U:=\bigl\{\rho\in L^\infty(\partial\Omega):\; \alpha - \epsilon \leq\rho(\mathbf{x}) \le \beta + \epsilon, \:\:\: \text{a.e. on }\partial\Omega\bigr\},
\]
where $\epsilon>0$ with $\alpha - \epsilon>0$.
Consider $\lambda_k:U\to\mathbb R$. Assume   $\lambda_{k}(\rho) $ is of multiplicity $l+1,$ $l\geq 0$, i.e. $ \lambda:=\lambda_{k}(\rho) = \cdots =\lambda_{k+l}(\rho)$.
\begin{enumerate}
\item[(i)] (\emph{Weak-$^\ast$ continuity})
If $\{\rho_i\}_1^\infty \subset U$ and  $\rho_i \stackrel{\ast}{\rightharpoonup} \rho$ in $L^\infty(\partial\Omega)$, then
$\lambda_k(\rho_i)\to \lambda_k(\rho)=\lambda$.
Moreover, after passing to a subsequence, there exist eigenfunctions
$\{u_{k+j,\rho_i}\}_{j=0}^l\subset H^1(\Omega)$ associated with
$\{\lambda_{k+j}(\rho_i)\}_{j=0}^l$ such that
\[
\int_{\partial\Omega}\rho_i\,u_{k+j,\rho_i}\,u_{k+s,\rho_i}\,dS=\delta_{js},
\qquad 0\le j,s\le l,
\]
and, for each $j=0,\dots,l$,
\[
u_{k+j,\rho_i}\rightharpoonup v_j \ \text{ in } H^1(\Omega),
\qquad
u_{k+j,\rho_i}\to v_j \ \text{ in } L^2(\partial\Omega),
\]
where $\{v_0,\dots,v_l\}$ is a $\rho$-orthonormal family of eigenfunctions for $\lambda$
and hence a $\rho$-orthonormal basis of the eigenspace $E_\lambda$.
\item[(ii)] (\emph{G\^ateaux derivative of a cluster sum})
  Define
\[
\Lambda_{k,l}(\rho):=\sum_{j=0}^l \lambda_{k+j}(\rho).
\]
For any $h\in L^\infty(\partial\Omega)$ with $\rho_t:=\rho+t h\in U$ for $|t|$ small,
$\Lambda_{k,l}$ is G\^ateaux differentiable at $\rho$ and
\[
\bigl(\Lambda'_{k,l}(\rho),h\bigr)
= -\,\lambda_k(\rho)\int_{\partial\Omega} h\,\Bigl(\sum_{j=0}^l u_{k+j,\rho}^2\Bigr)\,dS.
\]

\item[(iii)] (\emph{Fr\'echet differentiability})
If $N\le 3$, then $\Lambda_{k,l}(\rho)$ is Fr\'echet differentiable at $\rho$ relative to $U$ in $L^2(\partial \Omega)$-topology, i.e.
\[
\Lambda_{k,l}(\rho+h)-\Lambda_{k,l}(\rho)
=\bigl(\Lambda'_{k,l}(\rho),h\bigr)+o\bigl(\|h\|_{L^2(\partial\Omega)}\bigr).
\]
\end{enumerate}
\end{lem}
\begin{proof}
    \emph{(i)} Let us define  the resolvent operator $T: L^2(\partial \Omega)\to  L^2(\partial \Omega) $ such that $T(f)=u_f$ where $u_f$ is a solution of the following differential equation
	\begin{equation}\label{eq:resolventpde}
		\left\{
		\begin{array}{ccc}
			\Delta u= 0   &\text{in}&\Omega, \\
			\frac{\partial u}{\partial \mathbf{n}}+\rho u=\rho f   &\text{on}&\partial \Omega,
		\end{array}
		\right.
	\end{equation}
	and the values of $u_f$ on $\partial \Omega$ should be understood in the trace sense. The variational formula for a solution of this equation is 
	\begin{equation}\label{eq:resolvarform}
		\int_{\Omega} \nabla u\cdot \nabla \phi d\mathbf{x}+\int_{\partial\Omega}\rho u \phi dS= \int_{\partial\Omega}\rho f \phi dS,\quad\text{for all}\quad \phi\in H^{1}(\Omega),
	\end{equation}
	and then it is straightforward to show that this equation has a unique solution due to the Lax-Milgram Lemma  \cite[Theorem 8.3.4]{atkinson2009theoretical}. This reveals that the operator $T$ is well-defined. Moreover, it is easy to check that $T$ is a bounded linear operator. In addition, it is positive due to the ellipticity of \eqref{eq:resolventpde} and also symmetric because of the variational form of \eqref{eq:resolvarform}.  The operator $T$ is compact since $H^1(\Omega)$ compactly embedded in $L^2(\partial \Omega)$.
	
	Fix $i\in \mathbb{N}$, similarly, we can define the resolvant operator $T_i: L^2(\partial \Omega)\to  L^2(\partial \Omega) $ such that $T_i(f)=u_i$ where $u_i$ is a solution of 
	\begin{equation}\label{eq:resolventpdei}
		\left\{
		\begin{array}{ccc}
			\Delta u= 0   &\text{in}&\Omega, \\
			\frac{\partial u}{\partial \mathbf{n}}+\rho_i u=\rho_i f   &\text{on}&\partial \Omega.
		\end{array}
		\right.
	\end{equation}
	The variational form of this equation is 
	\begin{equation}\label{eq:resolvarformi}
		\int_{\Omega} \nabla u\cdot \nabla \phi d\mathbf{x}+\int_{\partial\Omega}\rho_i u \phi dS= \int_{\partial\Omega}\rho_i f \phi dS,\quad\text{for all}\quad \phi\in H^{1}(\Omega).
	\end{equation}
	Similar to that of operator $T$, one can infer that $T_i$ is a symmetric, positive and compact operator. 
	Then, in view of \cite[Theorem 2.3.1]{henrot2006extremum}  we have that
	\begin{equation}\label{ineq:sup}
		\left\lvert \mu_k(T)-\mu_k({T_i}) \right\rvert \leq \sup_{f\in L^2(\partial \Omega)} \frac{\|u_f-u_i\|_{L^2(\partial \Omega)}}{\|f\|_{L^2(\partial \Omega)}},
	\end{equation}
	where $\mu_k(T)$ and  $\mu_k(T_i)$ is the $k$-th respective
	eigenvalues.
	
	Applying \eqref{eq:resolvarformi}, \cite[Corollary 6.2]{auchmuty2005steklov}, the  Cauchy-Schwarz inequality and the trace theorem, we have
	\begin{align*}
		c\|u_i\|^2_{H^1(\Omega)}&\leq \int_{\Omega} \lvert\nabla u_i\rvert^2 d\mathbf{x}+ (\alpha -\epsilon)\int_{\partial\Omega} u_i^2  dS \leq \int_{\Omega} \lvert\nabla u_i\rvert^2 d\mathbf{x}+ \int_{\partial\Omega} \rho_i u_i^2  dS\\
		&\leq   (\beta +\epsilon) \|f\|_{L^2(\partial\Omega)}   \|u_i\|_{L^2(\partial\Omega)}\leq (\beta +\epsilon)   \|f\|_{L^2(\partial\Omega)}   \|u_i\|_{H^1(\Omega)},
	\end{align*}
	where $c>0$ and independent of $u_i$. This reveals that, the sequence $\|u_i\|^2_{H^1(\Omega)}$ is bounded from above. Thus, there is a subsequence for $\{u_i\}_1^\infty$, still  denoted by $\{u_i\}_1^\infty$ for simplicity,  and $u\in H^1(\Omega)$ such that
	\begin{equation}\label{eq:uiconv}
		 u_{i} \rightharpoonup  u, \,\,\,\text{weakly in }\,\,\, H^1(\Omega),\,\,\, \,\, \text{and}\,\, u_{i}\to u,\,\,\,\text{in}\,\, L^2(\partial\Omega).
	\end{equation}
	Note  $\{\rho_i\}_1^\infty\subset U$ and so there is subsequence, still denoted by $\{\rho_i\}_1^\infty$, such that $\rho_i \stackrel{\ast}{\rightharpoonup} \rho$ in   $L^\infty(\partial\Omega)$.
	Then, in view of \eqref{eq:resolvarformi}, $\rho_i \stackrel{\ast}{\rightharpoonup} \rho$ and \eqref{eq:uiconv}, passing $i\to \infty$ we obtain
	\[\int_{\Omega} \nabla u\cdot \nabla \phi d\mathbf{x}+\int_{\partial\Omega}\rho u \phi dS= \int_{\partial\Omega}\rho f \phi dS,\quad\text{for all}\quad \phi\in H^{1}(\Omega).\]
	Indeed, we have $u=u_f$ and consequently we can say $u_{i}\to u_f$ in  $L^2(\partial\Omega).$ Therefore, one can conclude $\lvert \mu_k(T)-\mu_k({T_i})\rvert\to 0 $ due to \eqref{ineq:sup}.
	
	For the $k$-th eigenvalues, we have 
	\[u_f=T(f)=\mu_k(T)f,\quad u_i=T_i(f)=\mu_k(T_i)f,\]
	and then \eqref{eq:resolvarform} and  \eqref{eq:resolvarformi} yield that
	\[\int_{\Omega} \nabla u_f\cdot \nabla \phi d\mathbf{x}+\int_{\partial\Omega}\rho u_f \phi dS= \int_{\partial\Omega}\rho \left(\frac{1}{\mu_k(T)}\right) u_f \phi dS,\quad\text{for all}\quad \phi\in H^{1}(\Omega),\]
	\[\int_{\Omega} \nabla u_i\cdot \nabla \phi d\mathbf{x}+\int_{\partial\Omega}\rho_i u_i \phi dS= \int_{\partial\Omega}\rho_i \left(\frac{1}{\mu_k(T_i)}\right) u_i \phi dS,\quad\text{for all}\quad \phi\in H^{1}(\Omega),\]
	which leads to the fact that $\lambda_{k}(\rho)= (1/\mu_k(T)-1)$ and $\lambda_{k}(\rho_i)= (1/\mu_k(T_i)-1)$. This means that $\lambda_{k}(\rho_i)\to \lambda_{k}(\rho)$ for any $k\geq 1$ as $\rho_i \stackrel{\ast}{\rightharpoonup} \rho$.
    
    	
For each $i$, choose eigenfunctions $\{u_{k+j,\rho_i}\}_{j=0}^l \subset H^1(\Omega)$ such that
\begin{equation}\label{eq:rhoi-orthonormal}
  \int_{\partial\Omega}\rho_i\,u_{k+j,\rho_i}\,u_{k+s,\rho_i}\,dS=\delta_{js},
  \qquad 0\le j,s\le l.
\end{equation}
By a reasoning similar to the above, one can find a uniform $H^1$ bound for these eigenfunctions, and  after extracting a subsequence, for each $j$ we have
\begin{equation}\label{eq:conv-us}
  u_{k+j,\rho_i}\rightharpoonup v_j \ \text{ in }H^1(\Omega),
  \qquad
  u_{k+j,\rho_i}\to v_j \ \text{ in }L^2(\partial\Omega).
\end{equation}
For any $\phi\in H^1(\Omega)$ the variational formulation \eqref{eq:varform} gives
\[
\int_\Omega \nabla u_{k+j,\rho_i}\!\cdot\!\nabla\phi\,dx
=\lambda_{k+j}(\rho_i)\int_{\partial\Omega}\rho_i\,u_{k+j,\rho_i}\,\phi\,dS.
\]
Since $\lambda_{k+j}(\rho_i)\to\lambda$ as $\rho_i \stackrel{\ast}{\rightharpoonup} \rho$ in $L^\infty(\partial\Omega)$, and \eqref{eq:conv-us} holds, passing to the limit yields
\begin{equation}\label{eq:limit-eig}
\int_\Omega \nabla v_j \!\cdot\! \nabla \phi\, dx
=\lambda \int_{\partial\Omega} \rho\, v_j\, \phi\, dS,
\qquad \forall\, \phi \in H^1(\Omega).
\end{equation}
so each $v_j$ is an eigenfunction for $\lambda$ at $\rho$.

Next, pass to the limit in \eqref{eq:rhoi-orthonormal} and
using $\rho_i \stackrel{\ast}{\rightharpoonup} \rho$,
\[
\int_{\partial\Omega}\rho\,v_j\,v_s\,dS
=\lim_{i\to\infty}\int_{\partial\Omega}\rho_i\,u_{k+j,\rho_i}\,u_{k+s,\rho_i}\,dS
=\delta_{js},\qquad 0\le j,s\le l.
\]
Thus $\{v_0,\dots,v_l\}$ is a $\rho$-orthonormal family of eigenfunctions for $\lambda$. Since the eigenspace $E_\lambda$ has dimension $l+1$, this family is a $\rho$-orthonormal basis. In particular, when $\lambda_k(\rho)$ is simple ($l=0$), we may take $v_0=u_{k,\rho}$ (up to sign).
\\
\emph{(ii)} Let $h\in L^\infty(\partial\Omega)$ and set $\rho_t=\rho+t h\in U$ for $|t|$ small.
Fix a $\rho$-orthonormal basis of the eigenspace $E_\lambda$:
\[
u_r:=u_{k+r,\rho},\qquad r=0,\dots,l,
\qquad
\int_{\partial\Omega}\rho\,u_r u_s\,dS=\delta_{rs}.
\]
For each $t$, choose eigenfunctions $\{u_{k+j,t}\}_{j=0}^l$ corresponding to
$\{\lambda_k(\rho_t),\dots,\lambda_{k+l}(\rho_t)\}$ and normalized by
\[
\int_{\partial\Omega}\rho_t\,u_{k+j,t}\,u_{k+m,t}\,dS=\delta_{jm},
\qquad 0\le j,m\le l.
\]

Fix $r\in\{0,\dots,l\}$ and $j\in\{0,\dots,l\}$. Test  weak formulation \eqref{eq:varform} at $\rho_t$
for $u_{k+j,t}$ with $\phi=u_r$ to obtain
\[
\int_{\Omega}\nabla u_{k+j,t}\cdot \nabla u_r\,dx
=\lambda_{k+j}(\rho_t)\int_{\partial\Omega}\rho_t\,u_{k+j,t}\,u_r\,dS.
\]
Similarly, test the weak formulation at $\rho$ for $u_r$ with $\phi=u_{k+j,t}$:
\[
\int_{\Omega}\nabla u_r\cdot \nabla u_{k+j,t}\,dx
=\lambda\int_{\partial\Omega}\rho\,u_r\,u_{k+j,t}\,dS.
\]
Since the left-hand sides are equal, subtracting and using $\rho_t=\rho+t h$ gives
\[
0=(\lambda_{k+j}(\rho_t)-\lambda)\int_{\partial\Omega}\rho\,u_{k+j,t}\,u_r\,dS
+\lambda_{k+j}(\rho_t)\,t\int_{\partial\Omega}h\,u_{k+j,t}\,u_r\,dS.
\]
Hence,
\begin{equation}\label{eq:key_identity}
\frac{\lambda_{k+j}(\rho_t)-\lambda}{t}\int_{\partial\Omega}\rho\,u_{k+j,t}\,u_r\,dS
=-\lambda_{k+j}(\rho_t)\int_{\partial\Omega}h\,u_{k+j,t}\,u_r\,dS.
\end{equation}

Define the $(l+1)\times(l+1)$ matrices
\[
C(t)_{rj}:=\int_{\partial\Omega}\rho\,u_{k+j,t}\,u_r\,dS,
\qquad
D(t)_{rj}:=\int_{\partial\Omega}h\,u_{k+j,t}\,u_r\,dS,
\]
and the diagonal matrices
\[
E(t):=\mathrm{diag}\Big(\frac{\lambda_k(\rho_t)-\lambda}{t},\dots,
\frac{\lambda_{k+l}(\rho_t)-\lambda}{t}\Big),
\qquad
L(t):=\mathrm{diag}\big(\lambda_k(\rho_t),\dots,\lambda_{k+l}(\rho_t)\big).
\]
Then \eqref{eq:key_identity} for all $r,j$ is exactly the matrix identity
\[
C(t)\,E(t)=-D(t)\,L(t).
\]
For $|t|$ small, $C(t)$ is invertible (indeed $C(t)$ converges to an orthogonal matrix, see below),
hence
\[
E(t)=-C(t)^{-1}D(t)L(t).
\]
Taking the trace yields
\begin{equation}\label{eq:trace_step}
\sum_{j=0}^l \frac{\lambda_{k+j}(\rho_t)-\lambda}{t}
=\mathrm{tr}\,E(t)
=-\mathrm{tr}\!\big(C(t)^{-1}D(t)L(t)\big).
\end{equation}

We now pass to the limit $t\to0$. By part (i) (applied to a sequence $t_i\to0$),
after extracting a subsequence we may assume
\[
\lambda_{k+j}(\rho_{t_i})\to\lambda,
\qquad
u_{k+j,t_i}\to v_j \ \text{in }L^2(\partial\Omega),
\]
where $\{v_j\}_{j=0}^l$ is a $\rho$-orthonormal basis of the eigenspace $E_\lambda$.
Consequently, there exists an orthogonal matrix $Q\in\mathbb{R}^{(l+1)\times(l+1)}$ such that
\[
v_j=\sum_{s=0}^l Q_{js}\,u_s,\qquad j=0,\dots,l.
\]
Define the symmetric matrix $M\in\mathbb{R}^{(l+1)\times(l+1)}$ by
\[
M_{rs}:=\int_{\partial\Omega}h\,u_r u_s\,dS.
\]
Using $u_{k+j,t_i}\to v_j$, we have the limits
\[
C(t_i)_{rj}\to \int_{\partial\Omega}\rho\,v_j u_r\,dS = Q_{jr},
\qquad
D(t_i)_{rj}\to \int_{\partial\Omega}h\,v_j u_r\,dS
=\sum_{s=0}^l Q_{js}M_{rs}.
\]
In matrix form,
\[
C(t_i)\to Q^\top,
\qquad
D(t_i)\to M\,Q^\top,
\qquad
L(t_i)\to \lambda I.
\]
Therefore,
\[
C(t_i)^{-1}D(t_i)L(t_i)\to (Q^\top)^{-1}(M Q^\top)(\lambda I)
=\lambda\,Q M Q^\top.
\]
Taking traces and using $\mathrm{tr}(QMQ^\top)=\mathrm{tr}(M)$ yields
\[
\lim_{i\to\infty}\mathrm{tr}\!\big(C(t_i)^{-1}D(t_i)L(t_i)\big)
=\lambda\,\mathrm{tr}(M)
=\lambda\sum_{r=0}^l\int_{\partial\Omega}h\,u_r^2\,dS.
\]
Combining with \eqref{eq:trace_step}, we conclude
\[
\lim_{t\to0}\sum_{j=0}^l \frac{\lambda_{k+j}(\rho_t)-\lambda}{t}
=-\lambda\sum_{r=0}^l\int_{\partial\Omega}h\,u_r^2\,dS
=-\lambda\int_{\partial\Omega}h\Big(\sum_{r=0}^l u_{k+r,\rho}^2\Big)\,dS,
\]
which proves that $\Lambda_{k,l}(\rho)=\sum_{j=0}^l\lambda_{k+j}(\rho)$ is G\^ateaux differentiable at $\rho$
and
\[
(\Lambda'_{k,l}(\rho),h)
= -\,\lambda_k(\rho)\int_{\partial\Omega} h\,\Big(\sum_{j=0}^l u_{k+j,\rho}^2\Big)\,dS.
\]
\emph{(iii)} (\emph{Fr\'echet differentiability})
Assume $N\le 3$ and fix $\{u_{k+j,\rho}\}_{j=0}^l$ as the $\rho$-orthonormal basis of $E_\lambda$. Let $\{\rho_i\}_1^\infty \subset U$ converges to $\rho \in U$ in $L^2(\partial\Omega)$.
By part~(i), after passing to a subsequence, the corresponding cluster eigenvalues satisfy
$\lambda_{k+j}(\rho_i)\to\lambda_{k+j}(\rho)$ for $j=0,\dots,l$, and there exist
$\rho_i$-orthonormal families of eigenfunctions
$\{u_{k+j,\rho_i}\}_{j=0}^l$ such that
\begin{equation}\label{eq:ukjConvTovj}
    u_{k+j,\rho_i}\rightharpoonup v_j \ \text{ in } H^1(\Omega),
\qquad
u_{k+j,\rho_i}\to v_j \ \text{ in } L^2(\partial\Omega),
\end{equation}
where $\{v_0,\dots,v_l\}$ is a $\rho$-orthonormal basis of $E_\lambda$. Since $N\le3$, Lemma~\ref{lem:requk} implies that $\{u_{k+j,\rho_i}\}$ is uniformly bounded in
$C(\overline{\Omega})$, hence in $L^\infty(\partial\Omega)$. In view of \eqref{eq:ukjConvTovj} and the uniform boundedness, we can invoke 
 the dominated convergence theorem, and   obtain
\[
\sum_{j=0}^l u_{k+j,\rho_i}^2 \to \sum_{j=0}^l v_{j}^2
\quad\text{in }L^2(\partial\Omega).
\]
We will show that $ \sum_{j=0}^l v_{j}^2 = \sum_{j=0}^l u_{k+j,\rho}^2$ and
together with $\lambda_k(\rho_i)\to\lambda_k(\rho)$, this shows that
$\Lambda'_{k,l}(\rho_i)\to\Lambda'_{k,l}(\rho)$ in $L^2(\partial\Omega)$.
Therefore, $\Lambda_{k,l}$ is Fr\'echet differentiable
with derivative given in~(ii), see, for instance, \cite[Proposition 5.3.4]{atkinson2009theoretical}.

Now,we show that
\[
\sum_{j=0}^l v_j^2 = \sum_{j=0}^l u_{k+j,\rho}^2.
\]
Indeed, since $\{v_0,\dots,v_l\}$ and $\{u_{k+j,\rho}\}_{j=0}^l$ are two
$\rho$-orthonormal bases of the same eigenspace $E_\lambda$, there exists an
orthogonal matrix $Q$ such that
\[
v_j=\sum_{s=0}^l Q_{js}\,u_{k+s,\rho},\qquad j=0,\dots,l.
\]
Consequently,
\[
\sum_{j=0}^l v_j^2
=\sum_{j=0}^l\Big(\sum_{s=0}^l Q_{js}u_{k+s,\rho}\Big)^2
=\sum_{s=0}^l u_{k+s,\rho}^2,
\]
where we used $Q^\top Q=I$.

\end{proof}

\begin{rem}
The enlargement of the admissible bounds by $\varepsilon>0$ is introduced to provide a neighbourhood of admissible perturbations around a given density $\rho \in \mathcal{M}$, while preserving uniform positivity. This allows one to consider variations $\rho+h$ that remain in the convex set $U$ for $\|h\|_{L^\infty(\partial \Omega)}$ sufficiently small. In particular, Fr\'echet differentiability in part~(iii) is understood \emph{relative to $U$}, i.e.\ the limit is taken over perturbations $h$ such that $\rho+h\in U$.
\end{rem}
\begin{rem}\label{rem:simpleeigs}
If $\lambda_k(\rho)$ is simple (i.e.\ $l=0$), then Lemma~\ref{lem:weakcon} reduces to the classical differentiability result for simple Steklov eigenvalues. In this case, the eigenfunction $u_{k,\rho}$ is uniquely determined up to a sign by the normalization
$\int_{\partial\Omega}\rho\,u_{k,\rho}^2\,dS=1$, and the G\^ateaux derivative in \emph{(ii)} simplifies to
\[
(\lambda_k'(\rho),h)
= -\,\lambda_k(\rho)\int_{\partial\Omega} h\,u_{k,\rho}^2\,dS.
\]
\end{rem}

Later on in this paper, we require the following technical lemma to derive the optimality condition. The lemma is closely related to the bathtub principle (see \cite[Theorem 1.14]{lieb2001analysis}). For completeness, we include the proof in the Appendix B.
\begin{lem}[Bathtub principle for extremal problems]\label{lem:bathtublem}
Let $f$ be a function in $L^1(\partial \Omega)$. 
Consider the following extremal problems
\[ \underset{\rho \in \mathcal{M}}{\min}\int_{\partial \Omega} \rho f  dS, \quad \text{with} \quad\tau = \inf \left\{ s \in \mathbb{R} : \left| \left\{ \mathbf{x} \in \partial \Omega : f(\mathbf{x}) \leq s \right\} \right| \geq A \right\},\]
\[ \underset{\rho \in \mathcal{M}}{\max}\int_{\partial \Omega} \rho f  dS,\quad \text{with} \quad\tau = \sup \left\{ s \in \mathbb{R} : \left| \left\{ \mathbf{x} \in \partial \Omega : f(\mathbf{x}) \geq s \right\} \right| \geq A \right\},\]
where $A$ is given by \eqref{eq:Aformula}.\\
i) Both problems admit  a solution. \\
ii) Function $\bar \rho\in \mathcal{M}$ is optimizer if and only if, almost everywhere on $\mathbf{x} \in \partial \Omega$, 
\begin{align*}   
\bar \rho(\mathbf{x}) = 
\begin{cases} 
   \beta,  & \text{if }\: f_{}(\mathbf{x}) < \tau, \\
   \in[\alpha,\beta], & \text{if }\: f_{}(\mathbf{x}) = \tau,\\
   \alpha,  & \text{if} \: f_{}(\mathbf{x})>\tau,
\end{cases} \qquad (\text{for minimization})
\end{align*}

\begin{align*}   
\bar \rho(\mathbf{x}) = 
\begin{cases} 
   \beta,  & \text{if }\: f_{}(\mathbf{x}) > \tau, \\
   \in[\alpha,\beta], & \text{if }\: f_{}(\mathbf{x}) = \tau,\\
   \alpha,  & \text{if} \: f_{}(\mathbf{x})< \tau.
\end{cases} \qquad (\text{for maximization})
\end{align*}
iii) There is a bang-bang optimizer  $\bar \rho = \alpha + (\beta -\alpha)\chi_{ E}$ such that
\begin{equation*}
		\{\mathbf{x}\in \partial \Omega:\: f(\mathbf{x})<\tau\}\subset  E \subset \{\mathbf{x}\in \partial \Omega:\: f(\mathbf{x})\leq\tau\},\qquad(\text{for minimization})
	\end{equation*}
\begin{equation*}
		\{\mathbf{x}\in \partial \Omega:\: f(\mathbf{x})>\tau\}\subset  E \subset \{\mathbf{x}\in \partial \Omega:\: f(\mathbf{x})\geq\tau\}.\qquad(\text{for maximization})
	\end{equation*}
iv)  If the optimizer $\bar \rho$ is unique,  then $\bar \rho  = \alpha + (\beta -\alpha)\chi_{ E}$ where
 \[E = \{\mathbf{x}\in \partial \Omega:\: f(\mathbf{x}) {\leq}\tau\}, \qquad (\text{for minimization})\] 
  \[E = \{\mathbf{x}\in \partial \Omega:\: f(\mathbf{x}) {\geq}\tau\}. \qquad (\text{for maximization})\] 
 \end{lem}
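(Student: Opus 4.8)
The plan is a direct comparison argument organized around the sub- and super-level sets of $f$ at the threshold height $\tau$; one could alternatively reduce to the classical bathtub principle through the affine change of density $\rho=\alpha+(\beta-\alpha)\psi$, under which $\psi$ ranges over $\{0\le\psi\le1,\ \int_{\partial\Omega}\psi\,dS=A\}$ and $\int_{\partial\Omega}\rho f\,dS$ differs from $(\beta-\alpha)\int_{\partial\Omega}\psi f\,dS$ by a constant. I would treat the minimization problem in full; the maximization case follows verbatim after replacing $f$ by $-f$ (equivalently, interchanging $\alpha\leftrightarrow\beta$ and the sub-/super-level sets). The first step is to pin down $\tau$: the distribution function $s\mapsto|\{f\le s\}|$ is non-decreasing, right-continuous, and increases from $0$ to $|\partial\Omega|$, and since $\alpha|\partial\Omega|<\gamma<\beta|\partial\Omega|$ forces $0<A<|\partial\Omega|$, the infimum defining $\tau$ is a finite real number with
\[
|\{f<\tau\}|\ \le\ A\ \le\ |\{f\le\tau\}|.
\]
Because the surface measure of a Lipschitz boundary is non-atomic, I can then select $E'\subset\{f=\tau\}$ with $|E'|=A-|\{f<\tau\}|$ and put $E:=\{f<\tau\}\cup E'$, so $|E|=A$ and $\bar\rho^{*}:=\alpha+(\beta-\alpha)\chi_{E}\in\mathcal N\subset\mathcal M$ satisfies the pointwise prescription of part (ii).

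Next I would establish sufficiency, which also yields existence. For any $\rho\in\mathcal M$ and any $\rho_{0}\in\mathcal M$ of the form in (ii), the equality of masses $\int_{\partial\Omega}(\rho-\rho_{0})\,dS=0$ gives
\[
\int_{\partial\Omega}(\rho-\rho_{0})\,f\,dS=\int_{\partial\Omega}(\rho-\rho_{0})\,(f-\tau)\,dS,
\]
and the integrand on the right is nonnegative almost everywhere: on $\{f<\tau\}$ one has $f-\tau<0$ while $\rho-\rho_{0}=\rho-\beta\le0$; on $\{f>\tau\}$ both $f-\tau>0$ and $\rho-\rho_{0}=\rho-\alpha\ge0$; on $\{f=\tau\}$ it vanishes. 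Hence $\int_{\partial\Omega}\rho f\,dS\ge\int_{\partial\Omega}\rho_{0}f\,dS$, so every admissible density of that form is a minimizer; applying this to $\rho_{0}=\bar\rho^{*}$ proves (i) for the minimization problem (and symmetrically for maximization, or by invoking weak-$^\ast$ compactness of $\mathcal M$ together with weak-$^\ast$ continuity of $\rho\mapsto\int_{\partial\Omega}\rho f\,dS$, valid since $f\in L^{1}(\partial\Omega)$). For necessity, if $\bar\rho$ is any minimizer, then choosing $\rho=\bar\rho$ and $\rho_{0}=\bar\rho^{*}$ above forces $\int_{\partial\Omega}(\bar\rho-\bar\rho^{*})(f-\tau)\,dS=0$ with a nonnegative integrand, so $(\bar\rho-\bar\rho^{*})(f-\tau)=0$ a.e.; therefore $\bar\rho=\beta$ a.e.\ on $\{f<\tau\}$ and $\bar\rho=\alpha$ a.e.\ on $\{f>\tau\}$, which is exactly the characterization in (ii). Part (iii) is then immediate, since the $\bar\rho^{*}$ constructed above is bang--bang, optimal, and obeys $\{f<\tau\}\subset E\subset\{f\le\tau\}$ by construction.

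Finally, for (iv): by (ii) a minimizer is determined once its restriction to $\{f=\tau\}$ is fixed, and that restriction is constrained only by $\alpha\le\bar\rho\le\beta$ and by the single scalar identity $\int_{\{f=\tau\}}\bar\rho\,dS=\gamma-\beta|\{f<\tau\}|-\alpha|\{f>\tau\}|$; on a non-atomic measure this determines $\bar\rho$ uniquely only when the constraint forces it to be constant on $\{f=\tau\}$ (in particular when $|\{f=\tau\}|=0$), in which case $\bar\rho$ collapses to the characteristic function of the relevant level set, giving $\bar\rho=\alpha+(\beta-\alpha)\chi_{\{f\le\tau\}}$. I expect no serious obstacle here: the only genuinely delicate point throughout is the behaviour on the level set $\{f=\tau\}$, where optimizers need not be bang--bang and which is precisely what determines the sharp form in (ii) and the uniqueness in (iv); the remaining care is in the elementary measure-theoretic facts used in the first step (right-continuity of the distribution function, non-atomicity of the surface measure).
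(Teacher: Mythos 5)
Your proposal is correct, and the core identity it rests on --- that the mass constraint lets one replace $f$ by $f-\tau$, after which $\int_{\partial\Omega}(\rho-\rho_0)(f-\tau)\,dS$ has a sign for any $\rho_0$ of the prescribed form --- is sound; the sign analysis on the three level sets and the finiteness of $\tau$ via the right-continuity of the distribution function are all handled appropriately. Where you genuinely diverge from the paper is in the \emph{necessity} direction of (ii) and in (i). The paper proves existence abstractly (weak-$^\ast$ compactness of the feasible set plus weak-$^\ast$ continuity of the linear cost) and proves necessity by contradiction: assuming $\bar\rho>\alpha$ on a positive-measure subset $H$ of $\{f>\tau\}$, it constructs a mass-preserving swap with a subset $F$ of $\{f\le\tau\}$ where $\bar\rho<\beta$ and shows the swap strictly decreases the cost. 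You instead first build an explicit admissible competitor $\bar\rho^{*}$ of the prescribed form (which simultaneously gives existence constructively and part (iii)), prove it is optimal by the sign argument, and then extract necessity as an equality case: $\int(\bar\rho-\bar\rho^{*})(f-\tau)\,dS=0$ with a.e.\ nonnegative integrand forces the integrand to vanish, hence the pointwise characterization off the level set $\{f=\tau\}$. This complementary-slackness route is shorter and avoids the delicate step in the paper of verifying that a suitable swap set $F$ with the right mass exists; the paper's perturbation argument, on the other hand, is self-contained in that it does not require first constructing and certifying an explicit optimizer. Your treatment of (iv) is at the same level of precision as the paper's: both implicitly set aside the degenerate case $A=|\{f<\tau\}|$ with $|\{f=\tau\}|>0$, where the unique minimizer would be $\alpha+(\beta-\alpha)\chi_{\{f<\tau\}}$ rather than $\chi_{\{f\le\tau\}}$, so this is not a gap relative to the paper.
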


\section{Convexity and Concavity Phenomena in Elliptic Eigenvalue Problems}\label{sec:convexity}

The convexity or concavity of the objective functional plays a crucial role in optimization, as it allows one to leverage well-established results and algorithms from convex optimization theory. Unfortunately, this property does not generally hold for eigenvalue problems. In particular, eigenvalues of elliptic operators are typically neither convex nor concave with respect to the coefficients. For example, let \(\mu_k(\rho)\) be the \(k\)th eigenvalue of
\(-\Delta u=\mu\,\rho\,u\) in \(\Omega\) with \(u=0\) on \(\partial\Omega\). We give an  example showing that \(1/\mu_k(\rho)\) is neither convex nor concave for \(k=2,3,4\) in Appendix~A. In general, $\mu_k(\rho)$ and ${1}/{\mu_k(\rho)}$ are neither convex nor concave. For more details, see Appendix~A.

Consider the Steklov problem \eqref{eq:mainpde} on the unit disk $\Omega$ centered at the origin. We introduce two weight functions 
\[
\rho_{1}(\theta)=\begin{cases}
	\beta & 0\le \theta<\pi,\\
	\alpha& \pi \le \theta < 2 \pi,
\end{cases}\quad\text{and}\quad\rho_{2}(\theta)=\begin{cases}
	\alpha & 0\le \theta<\pi,\\
	\beta& \pi \le \theta < 2 \pi,
\end{cases}
\]
and the convex combination, $\rho_{t}=t\rho_{1}+(1-t)\rho_{2}$ for $0\le t\le1.$ We have
\[
\rho_{t}(\theta)=\begin{cases}
	\alpha+t(\beta-\alpha) & \theta<\pi,\\
	\beta-t(\beta-\alpha) & \theta\ge\pi.
\end{cases}
\]
In Figure~\ref{fig:sigma_1_8}, the first eight Steklov eigenvalues of \( \rho_t \), i.e., \( \lambda_k(t) := \lambda_k(\rho_t) \) for \( t \in [0,1] \), with \( \alpha = 0.5 \), \( \beta = 1.5 \), $\gamma= 2\pi$ and \( k = 1,\dots,8 \), are plotted. The figure shows that along this path $\rho_t$, $\lambda_1(t)$ looks concave or $1/\lambda_1(t)$ looks convex but higher modes $\lambda_k(t)$ for $k \ge 3$ are neither concave or convex. Even though $\lambda_1$ and $\lambda_2$ shows concavity and convexity along the path $\rho_t$, respectively, but we will demonstrate its non-concavity and non-convexity in Remark~\ref{rem:convexityconclusion}, after discussing the non-uniqueness of maximizers in Remark~\ref{rem:distmaximizers}.
 In general, the functionals $\lambda_k(\rho)$ and ${1}/{\lambda_k(\rho)}$, $k\geq 1,$ are not convex and not concave on the admissible set $\mathcal{M}$.

The next propositions give a simple derivative test: if there are two distinct maximizers and the directional derivative at an endpoint is nonzero, then $\lambda_k$  is not concave; with two minimizers, not convex. Symmetry often gives non-unique optimizers, so these tests apply in many cases.

\begin{figure}[h!]
\centering\includegraphics[width=0.8\textwidth]{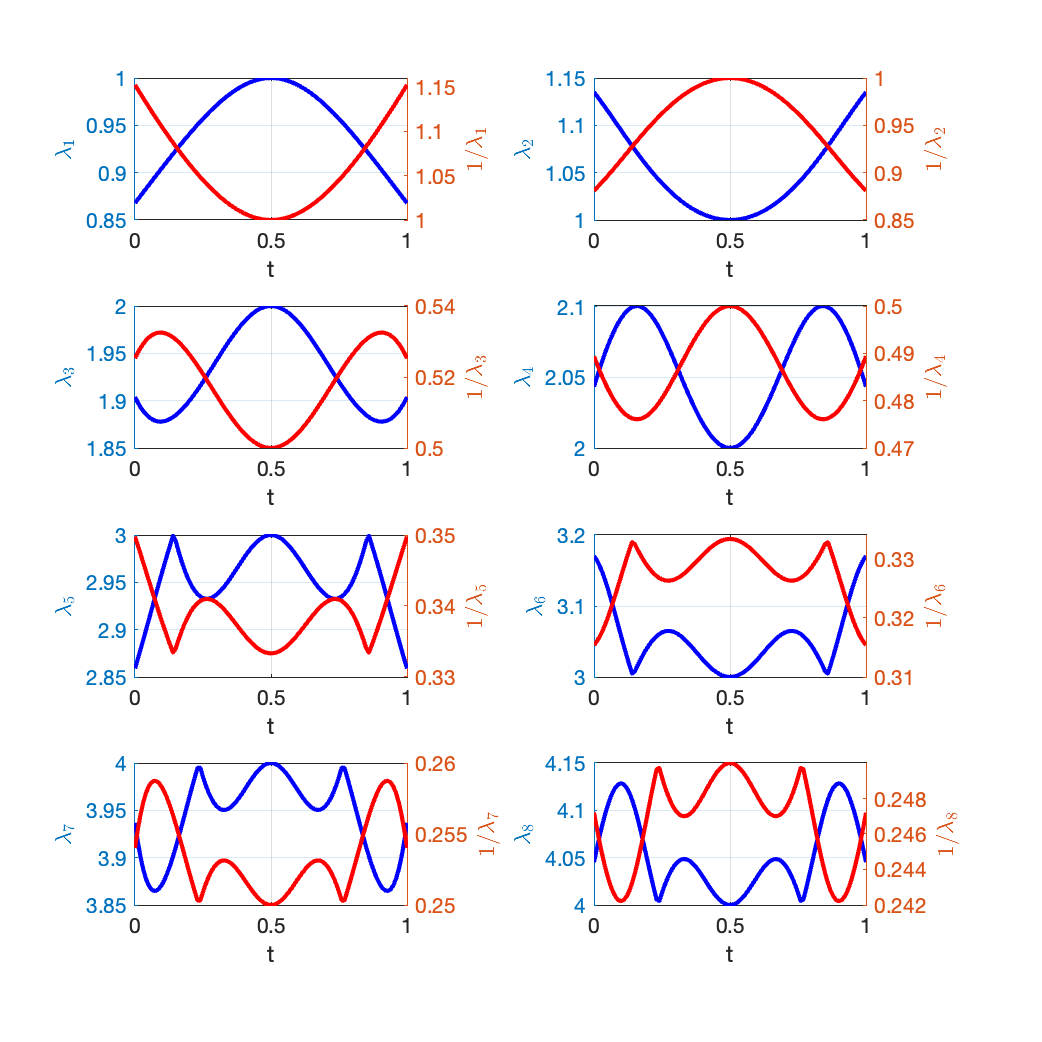}
\caption{The first eight Steklov eigenvalues $\lambda_k(\rho_t)$, $k=1,\cdots,8$ and their reciprocals $1/\lambda_k(\rho_t)$ with weight function $\rho_{t}=t\rho_{1}+(1-t)\rho_{2}$ with $\alpha = 0.5$, $\beta = 1.5$ and $\gamma = 2\pi$.\label{fig:sigma_1_8}}

\end{figure}

\begin{prop}\label{prop:nonconcavity-Lambda}
Fix $k\ge 1$. Assume the maximization problem \eqref{eq:maxp} admits
two distinct maximizers $\rho_0,\rho_1\in\mathcal M$. 
Suppose $\lambda_k(\rho_0)$ is simple and set $h:=\rho_1-\rho_0$.
If $\big(\lambda_k'(\rho_0),h\big)\neq 0$, then $\lambda_k$ is not concave on $\mathcal M$.
\end{prop}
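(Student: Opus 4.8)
The plan is a one-line convexity-along-a-segment argument fed by the Gâteaux differentiability in Lemma~\ref{lem:weakcon}(ii). Since $\gamma>0$ is a fixed constant, $\rho_0$ and $\rho_1$ maximize $\lambda_k(\rho)\gamma$ over $\mathcal M$ if and only if they maximize $\lambda_k(\rho)$; in particular $\lambda_k(\rho_0)=\lambda_k(\rho_1)=:m=\max_{\rho\in\mathcal M}\lambda_k(\rho)$. I would argue by contradiction: suppose $\lambda_k$ is concave on $\mathcal M$.

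Introduce the segment $\rho_t:=(1-t)\rho_0+t\rho_1=\rho_0+th$ for $t\in[0,1]$, which lies in $\mathcal M$ by convexity of $\mathcal M$, and set $g(t):=\lambda_k(\rho_t)$. Because $t\mapsto\rho_t$ is affine, concavity of $\lambda_k$ on $\mathcal M$ forces $g$ to be concave on $[0,1]$, with $g(0)=g(1)=m$. On the one hand $g(t)\le m$ for every $t$, since each $\rho_t\in\mathcal M$ and $m$ is the maximal value; on the other hand concavity gives $g(t)\ge(1-t)g(0)+tg(1)=m$. Hence $g\equiv m$ on $[0,1]$, so the right-hand derivative $g'(0^+)$ equals $0$. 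Since $\lambda_k(\rho_0)$ is simple, Lemma~\ref{lem:weakcon}(ii) with $l=0$ applies (for $|t|$ small, $\rho_0+th\in U$ because $\alpha>0$ and $M>\beta$ leave room on both sides), so $\lambda_k=\Lambda_{k,0}$ is Gâteaux differentiable at $\rho_0$ in the direction $h$, and this two-sided derivative equals $(\lambda_k'(\rho_0),h)=-\lambda_k(\rho_0)\int_{\partial\Omega}h\,u_{k,\rho_0}^2\,dS$. A two-sided derivative that exists coincides with the right-hand derivative, so $(\lambda_k'(\rho_0),h)=g'(0^+)=0$, contradicting the hypothesis $(\lambda_k'(\rho_0),h)\ne0$. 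Therefore $\lambda_k$ is not concave on $\mathcal M$.

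There is no real obstacle here; two points need a moment's care. First, one must check that $\rho_0+th$ stays in the set $U$ for small $|t|$ of either sign, so that the Gâteaux derivative supplied by Lemma~\ref{lem:weakcon}(ii) is genuinely the two-sided limit $g'(0)$ and hence coincides with the one-sided $g'(0^+)$ computed from $g\equiv m$. Second, the elementary fact that a concave function on $[0,1]$ attaining its maximum at both endpoints must be constant on $[0,1]$ is precisely the mechanism that converts "two distinct maximizers" into "vanishing directional derivative". The hypothesis that $\lambda_k(\rho_0)$ be simple is used only to make $(\lambda_k'(\rho_0),h)$ meaningful, i.e.\ to invoke the $l=0$ case of the differentiability lemma rather than the cluster-sum version.
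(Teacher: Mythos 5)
Your argument is essentially identical to the paper's proof: both restrict $\lambda_k$ to the segment $\rho_t=\rho_0+th$, use concavity plus the two maximizing endpoints to conclude the restriction is constant (hence has zero derivative at $t=0$), and then invoke Lemma~\ref{lem:weakcon}(ii) with $l=0$ (simplicity) to identify that derivative with $\bigl(\lambda_k'(\rho_0),h\bigr)\neq0$, a contradiction. Your added remarks on the two-sided admissibility of $\rho_0+th$ and the elementary fact about concave functions are correct fillers of details the paper leaves implicit.
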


\begin{proof}
Consider the segment $\rho_t=\rho_0+t h$, $t\in[0,1]$.
Due to the convexity of the set $\mathcal M$, $\rho_t \in \mathcal M$.
If $\lambda_k(\rho)$ were concave on the convex set $\mathcal M$ and both endpoints were maximizers, 
then $t\mapsto \lambda_k(\rho_t)$ would be constant on $[0,1]$, hence
\[
\frac{d}{dt}\lambda_k(\rho_t)\Big|_{t=0}=0.
\]
By differentiability at $\rho_0$  and  simplicity, Lemma \ref{lem:weakcon} yields 
\[
\frac{d}{dt}\lambda_k(\rho_t)\Big|_{t=0} \;=\; \big(\lambda_k'(\rho_0),h\big),
\]
which is nonzero by hypothesis, a contradiction.
\end{proof}
\begin{rem}[Non-convexity]
The same derivative argument shows non-convexity: if there exist two distinct minimizers 
$\rho_0,\rho_1\in\mathcal M$ of $\lambda_k(\rho)$ and $h=\rho_1-\rho_0$, then—at an endpoint where $\lambda_k(\rho_0)$ is simple—if 
$(\lambda_k'(\rho_0),h)\neq 0$, it follows that $\lambda_k(\rho)$ is not convex on $\mathcal M$. 
\end{rem}

\begin{rem}[Symmetry and non-uniqueness]
If the domain $\Omega$ and the admissible set $\mathcal M$ are invariant under boundary isometries $T$
(e.g., rotations/reflections on the unit disk), then for any optimizer $\hat\rho$ (minimizer or maximizer) of
$\lambda_k(\rho)$, the pushforward $\hat\rho\circ T$ is also an optimizer. 
If $\hat\rho\circ T\neq \hat\rho$ a.e.\ for some $T$, optimizers are not unique. 
This symmetry--induced non-uniqueness is a natural setting to apply the derivative tests above to rule out
concavity (from two maximizers) or convexity (from two minimizers) along segments joining distinct optimizers.
\end{rem}

\section{Minimization problem \eqref{eq:minp}}\label{sec:minprob}

This section is devoted to the study of problem \eqref{eq:minp}. At first, we prove the problem admits a bang-bang solution. 

\begin{thm}\label{thm:existmin}
There exists a bang--bang density $\hat\rho=\alpha+(\beta-\alpha)\chi_{\hat D}\in\mathcal M$
that solves \eqref{eq:minp} for every fixed $k\ge 1$.
\end{thm}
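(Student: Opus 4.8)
The plan is to combine the weak-$^\ast$ compactness of $\mathcal M$ with the weak-$^\ast$ continuity of $\lambda_k$ from Lemma~\ref{lem:weakcon}(i) to get \emph{existence} of a (not necessarily bang--bang) minimizer, and then to use the G\^ateaux derivative formula together with a bathtub/convexity argument to show that the high-density set may be taken of the form $\{\mathbf x:\ \sum_j u_{k+j}^2 \ge \tau\}$, i.e.\ bang--bang.

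First I would take a minimizing sequence $\rho_i\in\mathcal M$ with $\lambda_k(\rho_i)\gamma\to \inf_{\mathcal M}\lambda_k(\rho)\gamma$; note $\gamma$ is fixed, so it plays no role. Since $\mathcal M\subset U$ (with $M:=\beta$, say, enlarged slightly) is weak-$^\ast$ compact in $L^\infty(\partial\Omega)$, pass to a subsequence with $\rho_i\stackrel{\ast}{\rightharpoonup}\rho^\ast$; weak-$^\ast$ closedness of $\mathcal M$ (it is convex and strongly closed, hence weak-$^\ast$ closed, or: the pointwise bounds and the linear mass constraint are preserved under weak-$^\ast$ limits) gives $\rho^\ast\in\mathcal M$. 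By Lemma~\ref{lem:weakcon}(i), $\lambda_k(\rho_i)\to\lambda_k(\rho^\ast)$, so $\rho^\ast$ is a minimizer.

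Next I would upgrade $\rho^\ast$ to a bang--bang minimizer. Let $\lambda:=\lambda_k(\rho^\ast)$ have multiplicity $l+1$ and let $\{u_{k+j}\}_{j=0}^l$ be a $\rho^\ast$-orthonormal basis of the eigenspace; set $f:=\sum_{j=0}^l u_{k+j}^2\in L^1(\partial\Omega)$ (indeed in $C(\overline\Omega)$ when $N\le 3$, by Lemma~\ref{lem:requk}, though $L^1$ suffices for the bathtub lemma). By Lemma~\ref{lem:weakcon}(ii), for admissible perturbations $h=\rho-\rho^\ast$ with $\rho\in\mathcal M$,
\[
\frac{d}{dt}\Big|_{t=0^+}\Lambda_{k,l}(\rho^\ast+th) = -\lambda\int_{\partial\Omega} h\, f\, dS .
\]
Optimality of $\rho^\ast$ forces $\Lambda_{k,l}$ to have a minimum at $\rho^\ast$ along every such segment (since $\lambda_k(\rho^\ast+th)\ge\lambda$ and $\lambda_k\le\lambda_{k+j}$ for $j\ge 1$ near $t=0$, one has $\Lambda_{k,l}(\rho^\ast+th)\ge (l+1)\lambda$ for small $t\ge 0$, with equality at $t=0$; here I would need a short argument that the sum of the relevant eigenvalue branches is minimized, which is where care is needed). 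Hence $\int_{\partial\Omega}(\rho-\rho^\ast)f\,dS\ge 0$ for all $\rho\in\mathcal M$, i.e.\ $\rho^\ast$ solves $\min_{\rho\in\mathcal M}\int_{\partial\Omega}\rho f\,dS$. Now apply Lemma~\ref{lem:bathtublem}(iii) with this $f$: there is a bang--bang competitor $\bar\rho=\alpha+(\beta-\alpha)\chi_E$, $\{f<\tau\}\subset E\subset\{f\le\tau\}$, with $\int\bar\rho f\,dS=\int\rho^\ast f\,dS$. To conclude $\bar\rho$ is itself a minimizer of $\lambda_k$, I would argue that $\bar\rho$ and $\rho^\ast$ generate the \emph{same} eigenvalue: because $\rho^\ast - \bar\rho$ is orthogonal (in $L^1$-pairing) to $f = \sum u_{k+j}^2$ only on a set where $f=\tau$ is constant, the Rayleigh quotients $\int_\Omega|\nabla u_{k+j}|^2 / \int_{\partial\Omega}\bar\rho\,u_{k+j}^2\,dS$ are unchanged, so each $u_{k+j}$ remains an eigenfunction of the $\bar\rho$-problem with the same eigenvalue $\lambda$; a min-max comparison then pins $\lambda_k(\bar\rho)=\lambda$. (Alternatively, reformulate the whole minimization directly over $\mathcal N$ and note the bathtub optimizer attains the same value.)

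The main obstacle is the step controlling the \emph{sum} of eigenvalue branches: $\lambda_k$ alone is only directionally differentiable and its one-sided derivatives at a multiple eigenvalue are extreme eigenvalues of the quadratic form $h\mapsto -\lambda\int h\,u^2$ over the eigenspace, so a naive first-order condition on $\lambda_k$ is not a simple integral inequality. The device of working with $\Lambda_{k,l}$ (which \emph{is} differentiable by Lemma~\ref{lem:weakcon}(ii)) fixes this provided one checks that $\rho^\ast$ minimizes $\Lambda_{k,l}$ locally — which follows since $\lambda_{k+j}(\rho^\ast+th)\ge\lambda_k(\rho^\ast+th)\ge\lambda$ for small $t$ and all $j\ge 0$ by monotonicity of the labeled spectrum and optimality of $\lambda_k$ at $\rho^\ast$, while all these branches equal $\lambda$ at $t=0$. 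Once that is in place, the bathtub lemma and the equal-eigenvalue bookkeeping are routine, and replacing $E$ by a representative with $|E|=A$ exactly (absorbing the part of $\{f=\tau\}$ needed to meet the mass constraint) gives $\hat\rho=\alpha+(\beta-\alpha)\chi_{\hat D}\in\mathcal M$ solving \eqref{eq:minp}.
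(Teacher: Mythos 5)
Your existence step coincides with the paper's (weak-$^\ast$ compactness of $\mathcal M$ plus Lemma~\ref{lem:weakcon}(i)), but your route to the bang--bang structure diverges from the paper's and contains a genuine gap, plus a sign error. The sign error first: at a minimizer the one-sided derivative of $\Lambda_{k,l}$ in an admissible direction $h=\rho-\rho^\ast$ satisfies $-\lambda\int_{\partial\Omega}h\,f\,dS\ge 0$, hence $\int_{\partial\Omega}\rho f\,dS\le\int_{\partial\Omega}\rho^\ast f\,dS$; so $\rho^\ast$ \emph{maximizes} the linear functional $\rho\mapsto\int\rho f\,dS$ (consistent with Theorem~\ref{thm:OCmin}, where the high-density set is where $w$ is large), whereas you assert it minimizes it and then invoke the minimization branch of Lemma~\ref{lem:bathtublem}, which contradicts your own opening sentence. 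That is fixable. The real gap is the last step: from $\int_{\partial\Omega}\bar\rho f\,dS=\int_{\partial\Omega}\rho^\ast f\,dS$ with $f=\sum_j u_{k+j,\rho^\ast}^2$ you cannot conclude that ``each $u_{k+j}$ remains an eigenfunction of the $\bar\rho$-problem.'' Being an eigenfunction for $\bar\rho$ requires $\partial_{\mathbf n}u_{k+j}=\lambda\,\bar\rho\,u_{k+j}$ on $\partial\Omega$, i.e.\ $\bar\rho\,u_{k+j}=\rho^\ast u_{k+j}$ as boundary data, which changing $\rho$ on a set of positive measure destroys; equality of one scalar pairing does not preserve the operator. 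Even the weaker statement that each individual Rayleigh denominator $\int_{\partial\Omega}\bar\rho\,u_{k+j}^2\,dS$ is unchanged does not follow, since the bathtub identity only controls the \emph{sum} over the top cluster. And even if it did, a min--max bound on $\lambda_k(\bar\rho)$ requires a $k$-dimensional subspace of $H^1_{\bar\rho,0}$ together with control of the full Gram matrix $\bigl(\int_{\partial\Omega}\bar\rho\,u_i u_j\,dS\bigr)_{i,j\le k}$, not just the $(l+1)$-dimensional top eigenspace. Your parenthetical fallback (optimize over $\mathcal N$ directly) does not close the gap either, since $\mathcal N$ is not weak-$^\ast$ closed and attainment there is exactly what must be proved.

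The paper avoids first-order conditions altogether. It fixes the energy-orthonormal eigenfunctions $u_{1,\bar\rho},\dots,u_{k,\bar\rho}$ of the minimizer, and restricts to the affine slice $\widehat{\mathcal M}\subset\mathcal M$ of densities preserving \emph{both} the first moments $\int_{\partial\Omega}\rho\,u_{i,\bar\rho}\,dS=0$ and the second moments $\int_{\partial\Omega}\rho\,u_{i,\bar\rho}u_{j,\bar\rho}\,dS=\delta_{ij}/\lambda_i(\bar\rho)$ for all $i,j\le k$. These finitely many \emph{linear} constraints in $\rho$ are precisely what make the min--max comparison work: on the slice, $1/\lambda_k(\rho)\ge\lambda_{\min}(M(\rho))=1/\lambda_k(\bar\rho)$, so every point of the slice is a minimizer, and its extreme points (box constraints plus finitely many linear constraints) are bang--bang. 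If you want to salvage your approach, you would need to impose these same moment constraints on the bathtub competitor --- at which point you have essentially reconstructed the paper's argument.
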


\begin{proof}
\emph{Step 1: Existence.}
By Lemma~\ref{lem:weakcon}(i), $\rho\mapsto \lambda_k(\rho)$ is weakly sequentially
continuous, and $\mathcal M$ is weak-* compact, so \eqref{eq:minp} has a minimizer
$\bar\rho\in\mathcal M$.

\smallskip
\emph{Step 2: Reciprocal min--max on the mean–zero space.}
Using the variational characterization for the reciprocal of the eigenvalues, see \cite[page 99]{bandle1980isoperimetric}, we have
\[
\frac{1}{\lambda_k(\rho)}
=\max_{\substack{V\subset H^1_{\rho,0}\\ \dim V=k}}
\ \min_{\substack{0\neq u\in V\\ \int_\Omega|\nabla u|^2\,dx=1}}
\ \int_{\partial\Omega}\rho\,u^2\,dS,
\]
where $V$ is a subspace of dimension $k$.

\smallskip
\emph{Step 3: Fix a $k$–dimensional subspace at the minimizer.}
Let $u_{1,\bar\rho},\dots,u_{k,\bar\rho}$ be the first $k$ positive Steklov eigenfunctions
for $\rho=\bar\rho$, normalized to be energy orthonormal:
\[
\int_\Omega \nabla u_{i,\bar\rho}\!\cdot\!\nabla u_{j,\bar\rho}\,dx=\delta_{ij},\quad i,j=1,\dots,k.
\]
Set $U:=\mathrm{span}\{u_{1,\bar\rho},\dots,u_{k,\bar\rho}\}$. For any $\rho$, define the
$k\times k$ matrix
\[
M(\rho):=\big(m_{ij}(\rho)\big)_{i,j=1}^k,\qquad
m_{ij}(\rho):=\int_{\partial\Omega}\rho\,u_{i,\bar\rho}\,u_{j,\bar\rho}\,dS.
\]
If, in addition, $U\subset H^1_{\rho,0}$ (i.e. $\int_{\partial\Omega}\rho\,u_{i,\bar\rho}\,dS=0$
for all $i$), then taking $V=U$ in Step~2 gives
\begin{equation}\label{eq:lower-1overlambda-new}
\frac{1}{\lambda_k(\rho)}\
\ge\ \min_{\substack{0\neq u\in U\\ \int_\Omega|\nabla u|^2=1}}
\int_{\partial\Omega}\rho\,u^2\,dS
\ =\ \lambda_{\min}\big(M(\rho)\big).
\end{equation}

\smallskip
\emph{Step 4: An affine slice where $U$ is admissible and $M(\rho)$ is fixed.}
Consider the convex, nonempty affine slice
\[
\widehat{\mathcal M}:=\Bigl\{\rho\in\mathcal M:\ 
\int_{\partial\Omega}\rho\,u_{i,\bar\rho}\,dS=0\ (i=1,\dots,k),\
\int_{\partial\Omega}\rho\,u_{i,\bar\rho}\,u_{j,\bar\rho}\,dS=\frac{\delta_{ij}}{\lambda_i(\bar\rho)}\ (i,j=1,\dots,k)\Bigr\}.
\]
Clearly $\bar\rho\in\widehat{\mathcal M}$, so the slice is nonempty. For any $\rho\in\widehat{\mathcal M}$,
we have $U\subset H^1_{\rho,0}$ and $M(\rho)=\mathrm{diag}\big(1/\lambda_1(\bar\rho),\dots,1/\lambda_k(\bar\rho)\big)$,
hence
\[
\lambda_{\min}\big(M(\rho)\big)=\frac{1}{\lambda_k(\bar\rho)}.
\]
Combining with \eqref{eq:lower-1overlambda-new} yields
\[
\frac{1}{\lambda_k(\rho)}\ \ge\ \frac{1}{\lambda_k(\bar\rho)}
\quad\Longrightarrow\quad
\lambda_k(\rho)\ \le\ \lambda_k(\bar\rho)
\qquad\text{for all }\rho\in\widehat{\mathcal M}.
\]
Thus every extreme point of {the convex set} $\widehat{\mathcal M}$ is a minimizer of \eqref{eq:minp}.

\smallskip
\emph{Step 5: Extreme points are bang--bang.}
The set $\widehat{\mathcal M}$ is defined by box constraints $\alpha\le \rho\le\beta$, a mass
constraint $\int_{\partial\Omega}\rho\,dS=\gamma$, and finitely many linear  constraints
on $\rho$. By standard bathtub/extreme–point arguments (see, e.g., \cite[Lemma~2]{friedland1977extremal}),
its extreme points are bang--bang: $\rho=\alpha+(\beta-\alpha)\chi_D$ with $|D|=A$.
Choosing such an extreme point $\hat\rho\in\widehat{\mathcal M}$ gives a bang--bang minimizer.
\end{proof}

Next, we provide an optimality condition for a solution of \eqref{eq:minp}.
\begin{thm}\label{thm:OCmin}
Let \( \hat{\rho} \) be a minimizer of \( \lambda_k(\rho) \) over \( \rho \in \mathcal{M} \), and assume that \( \lambda_k(\hat{\rho}) \) has multiplicity \( l+1 \), where \( l \geq 0 \). Then
\[
\hat{\rho}(\mathbf{x}) = 
\begin{cases} 
\alpha, & \text{if } w(\mathbf{x}) < \tau, \\
\in [\alpha, \beta], & \text{if } w(\mathbf{x}) = \tau, \\
\beta, & \text{if } w(\mathbf{x}) > \tau,
\end{cases}
\]
where \( w(\mathbf{x}) = \sum_{j=0}^{l} u_{k+j,\hat{\rho}}^2(\mathbf{x}) \), and
$
\tau = \sup \left\{ s \in \mathbb{R} : \left| \left\{ \mathbf{x} \in \partial \Omega : w(\mathbf{x}) \geq s \right\} \right| \geq A \right\}.
$
\end{thm}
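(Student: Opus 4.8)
The plan is to reduce the statement to the bathtub principle (Lemma~\ref{lem:bathtublem}) by passing from the possibly non-differentiable eigenvalue $\lambda_k$ to the G\^ateaux-differentiable cluster sum $\Lambda_{k,l}$ of Lemma~\ref{lem:weakcon}. The key preliminary observation is that, by the monotone ordering $\lambda_k(\rho)\le\lambda_{k+1}(\rho)\le\cdots\le\lambda_{k+l}(\rho)$, one has $\lambda_k(\rho)\le\frac{1}{l+1}\Lambda_{k,l}(\rho)$ for every $\rho\in\mathcal M$, whereas the multiplicity hypothesis forces equality at $\hat\rho$, namely $\Lambda_{k,l}(\hat\rho)=(l+1)\,\lambda_k(\hat\rho)$. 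Combining these with the optimality inequality $\lambda_k(\hat\rho)\le\lambda_k(\rho)$ gives
\[
\Lambda_{k,l}(\hat\rho)=(l+1)\lambda_k(\hat\rho)\le(l+1)\lambda_k(\rho)\le\Lambda_{k,l}(\rho)\qquad\text{for every }\rho\in\mathcal M,
\]
so $\hat\rho$ is also a global minimizer of the differentiable surrogate $\Lambda_{k,l}$ over the convex set $\mathcal M$.

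Next I would write the first-order optimality condition for this convex minimization. Fix $\rho\in\mathcal M$ and set $h:=\rho-\hat\rho$; then $\rho_t:=\hat\rho+th$ lies in $\mathcal M$ for $t\in[0,1]$ and satisfies $0<\hat\rho+th<M$ for all small $|t|$ (as $M>\beta$), so $h$ is an admissible direction in Lemma~\ref{lem:weakcon}(ii). Since $t\mapsto\Lambda_{k,l}(\rho_t)$ is minimized at $t=0$, its right derivative there is nonnegative, and by Lemma~\ref{lem:weakcon}(ii) this derivative equals
\[
\bigl(\Lambda'_{k,l}(\hat\rho),h\bigr)=-\lambda_k(\hat\rho)\int_{\partial\Omega}(\rho-\hat\rho)\,w\,dS,\qquad w:=\sum_{j=0}^{l}u_{k+j,\hat\rho}^2.
\]
Here $w\in L^1(\partial\Omega)$ because each eigenfunction has trace in $L^2(\partial\Omega)$, and $w$ does not depend on the chosen $\hat\rho$-orthonormal basis of the eigenspace, since an orthogonal change of basis leaves $\sum_j u_{k+j,\hat\rho}^2$ invariant. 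As $\lambda_k(\hat\rho)>0$, the inequality rearranges to $\int_{\partial\Omega}\rho\,w\,dS\le\int_{\partial\Omega}\hat\rho\,w\,dS$ for all $\rho\in\mathcal M$; that is, $\hat\rho$ solves the linear maximization problem $\max_{\rho\in\mathcal M}\int_{\partial\Omega}\rho\,w\,dS$.

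Finally I would apply Lemma~\ref{lem:bathtublem} in its maximization form with $f=w$: part (ii) of that lemma says a maximizer $\hat\rho$ must equal $\beta$ on $\{w>\tau\}$, equal $\alpha$ on $\{w<\tau\}$, and lie in $[\alpha,\beta]$ on $\{w=\tau\}$, with $\tau=\sup\{s\in\mathbb{R}:|\{\mathbf{x}\in\partial\Omega:w(\mathbf{x})\ge s\}|\ge A\}$ --- precisely the claimed characterization. The main obstacle is the very first step: when $l\ge1$ the map $\lambda_k$ need not be differentiable at $\hat\rho$, so one cannot test it directly and must instead verify that the minimizer of $\lambda_k$ is also a minimizer of the differentiable cluster sum $\Lambda_{k,l}$ (the two-sided sandwich above). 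Once that replacement is justified, the rest is a routine combination of the convex first-order condition with the bathtub lemma; the only points requiring care are the admissibility of the perturbation direction in Lemma~\ref{lem:weakcon}(ii) and matching the eigenfunction normalization used there (namely $\int_{\partial\Omega}\hat\rho\,u_{k+j,\hat\rho}^2\,dS=1$) with the one implicit in the definition of $w$.
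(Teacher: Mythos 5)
Your proposal is correct and follows essentially the same route as the paper: pass from $\lambda_k$ to the differentiable cluster sum $\Lambda_{k,l}$, write the first-order variational inequality over the convex set $\mathcal M$ using Lemma~\ref{lem:weakcon}(ii), and conclude with the maximization form of Lemma~\ref{lem:bathtublem}. The only difference is that you spell out (via the sandwich $\Lambda_{k,l}(\hat\rho)=(l+1)\lambda_k(\hat\rho)\le(l+1)\lambda_k(\rho)\le\Lambda_{k,l}(\rho)$) the step the paper dismisses as ``straightforward to verify,'' which is a welcome addition rather than a deviation.
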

\begin{proof}
   It is straightforward to verify that if \( \hat{\rho} \) is a minimizer of \( \lambda_k(\rho) \), then it also minimizes the functional \( \Lambda_{k,l}(\rho) \) defined in Lemma~\ref{lem:weakcon}. According to Lemma~\ref{lem:weakcon}, this functional is differentiable. Moreover, it is standard (see, for example, \cite[Lemma 2.21]{troltzsch2010optimal}) that any minimizer \( \hat{\rho} \in \mathcal{M} \) satisfies the following variational inequality:
\[
\left(\Lambda_{k,l}^\prime(\hat\rho), \rho - \hat\rho \right) = -\lambda_k(\hat\rho) \int_{\partial\Omega} (\rho - \hat\rho) \, w \, dS \geq 0, \quad \text{for all } \rho \in \mathcal{M}.
\]
This implies that \( \hat{\rho} \) maximizes the functional
$\int_{\partial\Omega} \rho \, w \, dS$ {over} $ \rho \in \mathcal{M}.$
The desired assertion then follows by invoking Lemma~\ref{lem:bathtublem}-(ii).
\end{proof}

The optimality condition offers insights into the topology and geometry of the optimizer, guiding our numerical investigations. By continuity of eigenfunctions (Lemma~\ref{lem:requk}) and the identity \( \int_{\partial \Omega} \rho\, u_{k,\rho}\, dS = 0 \) for \( k \geq 1 \), all nontrivial Steklov eigenfunctions change sign and vanish somewhere on \( \partial \Omega \). Since the number and structure of these zeros remain unknown \cite{colbois2024some,Girouard_2017}, we make assumptions on their number when needed.

\begin{cor}\label{cor:disconnectedOptimSet}
Let \(\Omega \subset \mathbb{R}^2\) be a planar domain, and let \(\hat \rho = \alpha + (\beta - \alpha)\chi_{\hat D}\) be a minimizer of the simple eigenvalue \(\lambda_k(\rho)\) for some \(k \geq 1\). Assume the associated eigenfunction \(u_{k,\hat \rho}\) has \(m \geq 2\) distinct zeros on \(\partial \Omega\), and that \(A\) is sufficiently large. Then \(\hat D \subset \partial \Omega\) is disconnected in the relative topology of \(\partial \Omega\), with at least \(m\) connected components.
\end{cor}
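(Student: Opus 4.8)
The plan is to feed the optimality condition of Theorem~\ref{thm:OCmin} into the one–dimensional structure of $\partial\Omega$. Since $\lambda_{k}(\hat\rho)$ is simple, $l=0$ in Theorem~\ref{thm:OCmin}, so the relevant weight is $w:=u_{k,\hat\rho}^{2}$. As $N=2$, Lemma~\ref{lem:requk} gives $u_{k,\hat\rho}\in C(\overline\Omega)$, hence $w\in C(\partial\Omega)$, $w\ge0$, and $\{w=0\}$ equals the zero set of $u_{k,\hat\rho}|_{\partial\Omega}$, a finite set by hypothesis and hence of zero length. Because $\Omega$ is a bounded planar Lipschitz domain, $\partial\Omega$ is a finite disjoint union of Lipschitz Jordan curves, and the boundary zeros of $u_{k,\hat\rho}$ cut it into finitely many arcs $I_{1},\dots,I_{M}$, one for each gap between consecutive zeros, so $M\ge m$; on the relative interior of each $I_{j}$ the function $u_{k,\hat\rho}$ has a constant sign by the intermediate value theorem, so $w>0$ there, $w=0$ at the endpoints of $I_{j}$, and $w$ attains a strictly positive maximum $m_{j}:=\max_{I_{j}}w$ at an interior point $p_{j}$ of $I_{j}$.

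With $l=0$, Theorem~\ref{thm:OCmin} says $\hat\rho=\beta$ a.e.\ on $\{w>\tau\}$ and $\hat\rho=\alpha$ a.e.\ on $\{w<\tau\}$, with $\tau=\sup\{s:|\{w\ge s\}|\ge A\}$; equivalently, up to a null set, $\{w>\tau\}\subset\hat D\subset\{w\ge\tau\}$. I would then record two facts. First, $\tau>0$: since $|\{w=0\}|=0$ and $A<|\partial\Omega|$, one has $|\{w\ge s\}|\uparrow|\partial\Omega|>A$ as $s\downarrow0$, so $|\{w\ge s\}|\ge A$ for some $s>0$. Second, if $A$ is sufficiently large then $\tau<\min_{j}m_{j}$: the open set $\{w<\min_{j}m_{j}\}$ contains a relative neighborhood of every zero, hence has positive length $\eta$, so $|\{w\ge\min_{j}m_{j}\}|\le|\partial\Omega|-\eta$; once $A>|\partial\Omega|-\eta$ this is $<A$, and the definition of $\tau$ as a supremum then forces $\tau<\min_{j}m_{j}$. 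This is the precise meaning of ``$A$ sufficiently large'', with the threshold $|\partial\Omega|-\eta$ depending on $\hat\rho$ through $w$.

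To conclude, I would use $\tau>0$ and continuity to pick pairwise disjoint open arcs $N_{i}$, one around each boundary zero of $u_{k,\hat\rho}$, with $w<\tau$ on each $\overline{N_{i}}$ and small enough that no $\overline{N_{i}}$ contains any $p_{j}$. For each $j$, removing from $I_{j}$ the (at most two) end-pieces covered by these $N_{i}$ leaves a compact arc $L_{j}$ which is a distinct connected component of $\partial\Omega\setminus\bigcup_{i}N_{i}$ and has $p_{j}$ in its relative interior. Since $w<\tau$ on $\bigcup_{i}\overline{N_{i}}$, the set $\{w\ge\tau\}$ is disjoint from every $\overline{N_{i}}$, hence contained in $\partial\Omega\setminus\bigcup_{i}N_{i}$; and since $w(p_{j})=m_{j}>\tau$, the relatively open set $\{w>\tau\}$ is a neighborhood of $p_{j}$, so $\{w>\tau\}\cap L_{j}$ has positive length. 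Consequently, any Borel set $B$ with $\{w>\tau\}\subset B\subset\{w\ge\tau\}$ lies in $\partial\Omega\setminus\bigcup_{i}N_{i}$ and meets each of the pairwise separated arcs $L_{1},\dots,L_{M}$, so $B$ has at least $M$ connected components. Since $\hat D$ agrees a.e.\ with such a $B$, it has at least $M\ge m$ connected components; as $m\ge2$, $\hat D$ is disconnected.

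The main obstacle is the second fact about $\tau$: one must make ``$A$ sufficiently large'' quantitative and verify that it pushes $\tau$ below every $m_{j}$, and the natural threshold $|\partial\Omega|-\eta$ is tied to the minimizer $\hat\rho$ itself (hence to $\gamma$), so a threshold uniform over all admissible data would require a priori lower bounds on $\min_{j}m_{j}$ and on the length of $\{w<\min_{j}m_{j}\}$, which do not seem available without further structure. A minor technical nuisance, already accommodated above, is that Theorem~\ref{thm:OCmin} determines $\hat\rho$ only a.e., so the topological statement is to be read for the canonical representative of $\hat D$ squeezed between $\{w>\tau\}$ and $\{w\ge\tau\}$.
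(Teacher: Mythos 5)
Your proof is correct and follows the same strategy as the paper's: apply the optimality condition of Theorem~\ref{thm:OCmin} with $l=0$, and use the continuity of $u_{k,\hat\rho}$ on $\overline{\Omega}$ (Lemma~\ref{lem:requk}) to produce an arc around each boundary zero on which $w<\tau$, hence disjoint from $\hat D\subset\{w\ge\tau\}$. You are in fact more complete than the paper's argument: you check that $\tau>0$, and---crucially---that for $A$ sufficiently large one has $\tau<\min_j m_j$, so that $\{w>\tau\}\subset\hat D$ genuinely meets each arc between consecutive zeros; the paper's proof leaves this last point implicit when it asserts that the excluded arcs ``separate the support of $\hat D$'' into at least $m$ components.
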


\begin{proof}
Parametrize the boundary \(\partial \Omega\) by arc length \(s \in [0, |\partial \Omega|]\), and define \(w(s) := u_{k,\hat \rho}^2(s)\). Without loss of generality, suppose that \(w(0) = w(|\partial \Omega|) = 0\), meaning the boundary of \( \partial \Omega \) starts and ends at  a point where \( u_{k,\hat \rho} = 0 \). Since \(u_{k,\hat \rho}\) has \(m\) distinct zeros, there exist \(s_1 < \cdots < s_{m-1} \in (0, |\partial \Omega|)\) such that \(w(s_i) = 0\) for \(i = 1, \dots, m - 1\).

By the optimality condition in Theorem~\ref{thm:OCmin}, we have
\[
\hat D \subset \left\{ s \in (0, |\partial \Omega|) : w(s) \geq \tau \right\}.
\]
If \(A\) is large enough, then around each zero \(s_i\), there exists a nontrivial open arc \(I_i\) where \(w(s) < \tau\), and hence \(I_i \cap \hat D = \emptyset\). Since these intervals are disjoint and separate the support of \(\hat D\), it follows that \(\hat D\) must have at least \(m\) connected components.
\end{proof}

\section{Maximization problem \eqref{eq:maxp}}\label{sec:maxprob}

This section is devoted to the maximization problem \eqref{eq:maxp}.  
We begin by addressing the question of existence and identifying the optimal solution in the case where \(\Omega\) is a circle in \(\mathbb{R}^2\). 
It is observed that, in contrast to the minimization problem, the maximizer is not necessarily of bang-bang type.

\begin{thm}\label{thm:MaxExistAnalSol}
The optimization problem \eqref{eq:maxp} admits a solution \( \check{\rho} \). Moreover, if \( \Omega \) is a circle, then \( \check{\rho}(\mathbf{x}) \equiv \gamma / |\partial \Omega| \) is a maximizer for \eqref{eq:maxp} when $k=1$.
\end{thm}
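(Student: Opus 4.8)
The plan is to prove the two assertions separately. For existence, I would invoke Lemma~\ref{lem:weakcon}(i): since $\rho\mapsto\lambda_k(\rho)$ is weak-$^\ast$ continuous and $\mathcal M$ is weak-$^\ast$ compact (as recalled in the introduction), the supremum in \eqref{eq:maxp} is attained at some $\check\rho\in\mathcal M$. This is a short Weierstrass-type argument and should occupy only a few lines.

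\medskip

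For the identification when $\Omega$ is the unit circle and $k=1$, the approach rests on Weinstock-type sharp bounds. First I would compute $\lambda_1(\rho_0)\gamma$ for the constant density $\rho_0\equiv\gamma/|\partial\Omega|$: on the circle of perimeter $2\pi R$, the constant-density Steklov problem reduces (after separation of variables) to the classical Steklov problem on the disk with a rescaled eigenvalue, whose first nonzero eigenvalue is $1/R$ with multiplicity two and eigenfunctions $u_1=x/\sqrt{\pi R^3}\,$ and $u_2=y/\sqrt{\pi R^3}$, scaled to satisfy $\int_{\partial\Omega}\rho_0 u_i u_j\,dS=\delta_{ij}$; this gives an explicit value of $\lambda_1(\rho_0)\gamma$. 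Then I would show that for \emph{every} admissible $\rho\in\mathcal M$ one has $\lambda_1(\rho)\gamma\le\lambda_1(\rho_0)\gamma$. The natural tool is the variational characterization \eqref{eq:lambdavarform2}: one uses the two harmonic coordinate functions $x,y$ (which are the constant-density eigenfunctions, restricted to the boundary) as trial functions. The key identity is that on the circle $\int_\Omega|\nabla x|^2\,d\mathbf x=\int_\Omega|\nabla y|^2\,d\mathbf x=|\Omega|=\pi R^2$ and $\int_\Omega\nabla x\cdot\nabla y\,d\mathbf x=0$, while $\int_{\partial\Omega}\rho\,x\,dS$ and $\int_{\partial\Omega}\rho\,y\,dS$ need not vanish. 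To repair the mean-zero constraint against $u_{0,\rho}\equiv\text{const}$, I would replace $x,y$ by $x-a$, $y-b$ for suitable constants (a "center of mass" translation), which only \emph{decreases} the numerator $\int_\Omega|\nabla\cdot|^2$ (it is unchanged) but must be chosen so the boundary-weighted means vanish; by a standard degree/continuity argument (Hersch–Payne–Schiffer type balancing, or a conformal-automorphism / Brouwer fixed-point argument) one can choose the translation so that $\int_{\partial\Omega}\rho\,(x-a)\,dS=\int_{\partial\Omega}\rho\,(y-b)\,dS=0$ simultaneously. Feeding the two-dimensional trial space $\mathrm{span}\{x-a,y-b\}\subset\mathcal A^1_\rho\cap\mathcal A^2_\rho$ into the min–max for $\lambda_1$ (and using $\int_{\partial\Omega}\rho\,dS=\gamma$ to bound the denominator from below by Cauchy–Schwarz, or more precisely $\int_{\partial\Omega}\rho\,[(x-a)^2+(y-b)^2]\,dS\ge$ something controlled via $(x-a)^2+(y-b)^2\le$ the squared distance, bounded on the circle) yields $\lambda_1(\rho)\,\gamma\le\lambda_1(\rho_0)\,\gamma$.

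\medskip

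The main obstacle is the last estimate: after the translation, I need a \emph{lower} bound on $\int_{\partial\Omega}\rho\,[(x-a)^2+(y-b)^2]\,dS$ in terms of $\gamma$ that is saturated exactly when $\rho$ is constant and $a=b=0$. On the circle this is where the geometry enters decisively: $(x,y)$ lies on $\partial\Omega$, so $x^2+y^2=R^2$ is constant, hence $\int_{\partial\Omega}\rho\,(x^2+y^2)\,dS=R^2\gamma$, and $\int_{\partial\Omega}\rho\,[(x-a)^2+(y-b)^2]\,dS=R^2\gamma - 2a\!\int\rho x - 2b\!\int\rho y + (a^2+b^2)\gamma = R^2\gamma+(a^2+b^2)\gamma\ge R^2\gamma$ once the cross terms vanish by the balancing choice. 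Combined with $\int_\Omega|\nabla(x-a)|^2+|\nabla(y-b)|^2\,d\mathbf x = 2|\Omega|=2\pi R^2$, the min–max gives $\lambda_1(\rho)\le \frac{2\pi R^2}{R^2\gamma}=\frac{2\pi}{\gamma}$, i.e. $\lambda_1(\rho)\gamma\le 2\pi=\lambda_1(\rho_0)\gamma$ with equality for $\rho_0$. So the "hard part" is really just ensuring the balancing translation exists and does not destroy admissibility of the trial functions — a fixed-point argument — and then the circular geometry makes everything explicit. I would close by noting equality holds for $\check\rho\equiv\gamma/|\partial\Omega|$, so it is a maximizer; uniqueness is not claimed (and indeed fails, per the later discussion of infinitely many maximizers).
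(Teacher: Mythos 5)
Your proof is correct, but it takes a genuinely different route from the paper on the second assertion. The existence part is identical (weak-$^\ast$ continuity from Lemma~\ref{lem:weakcon}(i) plus weak-$^\ast$ compactness of $\mathcal M$). For the identification on the circle, the paper simply cites Weinstock's theorem (and \cite[Theorem 3.1]{colbois2024some}): among all simply connected planar domains and all positive densities of mass $\gamma$ one has $\lambda_1(\rho,\Omega)\gamma\le 2\pi$, with equality for the disk with constant density; restricting to the fixed disk and to $\mathcal M$ then immediately shows the constant density is a maximizer. You instead reprove the bound from scratch on the circle via the variational characterization \eqref{eq:lambdavarform2} with the translated coordinate trial functions $x-a$, $y-b$, summing the two Rayleigh quotients and using $x^2+y^2=R^2$ on $\partial\Omega$ to get $\int_{\partial\Omega}\rho\,[(x-a)^2+(y-b)^2]\,dS=R^2\gamma+(a^2+b^2)\gamma\ge R^2\gamma$ against $\int_\Omega|\nabla(x-a)|^2+|\nabla(y-b)|^2\,d\mathbf x=2\pi R^2$. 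This buys a self-contained, elementary proof of the sharp inequality on the disk, at the cost of not generalizing beyond centered circles (the paper's citation covers all simply connected domains). One simplification you should note: the ``main obstacle'' you identify is not an obstacle at all. The balancing condition $\int_{\partial\Omega}\rho\,(x-a)\,dS=\int_{\partial\Omega}\rho\,(y-b)\,dS=0$ is \emph{linear} in $(a,b)$ and is solved explicitly by the weighted centroids $a=\gamma^{-1}\int_{\partial\Omega}\rho\,x\,dS$, $b=\gamma^{-1}\int_{\partial\Omega}\rho\,y\,dS$; no Brouwer fixed-point or Hersch-type conformal renormalization is needed, because you translate by constants rather than composing with M\"obius automorphisms (the latter is only required for general simply connected domains, where a plain translation would destroy the identity $x^2+y^2=\mathrm{const}$ on the boundary). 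Also, each trial function only needs to lie in $\mathcal A^1_\rho$ (orthogonality to the constant mode), not in $\mathcal A^1_\rho\cap\mathcal A^2_\rho$ as you wrote.
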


\begin{proof}
Existence of a maximizer \( \check{\rho} \in \mathcal{M} \) follows by an argument similar to that used for the minimization problem.

Let us denote the first Steklov eigenvalue as \( \lambda_1(\rho, \Omega) \) to emphasize its dependence on both \( \rho \) and \( \Omega \).
To identify the maximizer in the case when \( \Omega \subset \mathbb{R}^2 \) is a circle, we use a classical result by Weinstock~\cite{Weinstock_1954} and \cite[Theorem 3.1]{colbois2024some}. It states that among all simply connected planar domains and for densities \( \rho \) with fixed mass \( \int_{\partial \Omega} \rho \, ds = \gamma \), the first Steklov eigenvalue satisfies \( \lambda_1(\rho,\Omega) \leq 2\pi/\gamma \), with equality attained if  \( \Omega \) is a disk and \( \rho \) is constant. Using this, we obtain
\[
\max_{\rho \in \mathcal{M}} \lambda_1(\rho)\gamma \leq \max_{(\rho,\Omega) \in \mathcal{M}_{\rho,\Omega}} \lambda_1(\rho, \Omega)\gamma \leq 2\pi,
\]
where
\[
\mathcal{M}_{\rho,\Omega} := \left\{ (\rho, \Omega) : \rho(\mathbf x)>0,\:\int_{\partial \Omega} \rho \, ds = \gamma,\ \Omega \subset \mathbb{R}^2\ \text{simply connected} \right\}.
\]
Therefore, the optimal value \( 2\pi \) is achieved within this larger class when \( \Omega \) is a disk and \( \rho(\mathbf{x}) \equiv \gamma / |\partial \Omega| \). In particular, fixing \( \Omega \) as a circle, the same constant density \( \rho(\mathbf{x}) \equiv \gamma / |\partial \Omega| \) achieves the maximum in the admissible class \( \mathcal{M} \). 
\end{proof}

\begin{rem}[Non-uniqueness of maximizers on the disk]\label{rem:distmaximizers}
In the minimization problem over a circular domain, the existence of infinitely many minimizers is explained by rotational symmetry, as any rotation of a given minimizer yields another. By contrast, in the maximization problem, in particular for maximizing $\lambda_1$, we establish the existence of infinitely many maximizers whose distinct profiles are not consequences of domain symmetries.

As it is well known that all biholomorphic maps of the unit disk $D$ in the complex plane
to itself are the M\"{o}bius transformations of the form
\[
f(z)=e^{i\varphi}\frac{z-a}{1-\bar{a}z},
\]
where $z$ is the complex variable, $a\in D$ is a point inside the unit circle and $\varphi\in[0,2\pi)$.
The derivative of $f(z)$ is given by 
\[
f'(z)=e^{i\varphi}\frac{1-|a|^{2}}{(1-\bar{a}z)^{2}}.
\]
We then have 
\[
\rho=|f'(z)|=\left|\frac{1-|a|^{2}}{(1-\bar{a}z)^{2}}\right|.
\]
Furthermore, let $z=e^{i\theta}$ and $a=re^{i\eta}$ with $r<1$,
\begin{equation*}
    \int_{\partial D}\rho ds =\int_{|z|=1}\left|\frac{1-|a|^{2}}{(1-\bar{a}z)^{2}}\right||dz| =\int_{0}^{2\pi}\frac{1-r^{2}}{1-2r\cos(\theta-\eta)+r^{2}}d\theta =2\pi,
\end{equation*}
where we apply the Poisson's integral  formula. Recall that it was proved by Weinstock in 1954~\cite{Weinstock_1954} that, among simply connected planar domains and for densities \( \rho \) with fixed mass \( \int_{\partial \Omega} \rho \, dS = \gamma \), the first Steklov eigenvalue satisfies \( \lambda_{1}(\rho) \leq 2\pi / \gamma \), with equality attained when \( \Omega \) is a disk, and \( \rho \) is constant.
Hence, when $\rho \equiv 1$, we know that the first eigenvalue reaches its maximum value $\lambda_1=\lambda_2=1$ with the corresponding eigenfunctions $r\cos(\theta)$ and $r\sin(\theta)$. Consider the conformal mapping $f(z)$ from unit disk to itself. Define $v = u \circ f^{-1}$. The function $v$ satisfies the non-weighted Steklov eigenvalue problem  \begin{equation}\label{eq:non-weightedmainpde}
\begin{cases}
\Delta v=0 & \text{in } \Omega,\\
\displaystyle \frac{\partial v}{\partial \mathbf n}=\lambda \, \,v & \text{on } \partial\Omega,
\end{cases}
\end{equation}
so $\max \lambda_1 \int_{\partial D} \rho ds = 2\pi$ as well. This means that each conformal map $f$ of the disk to itself yields a density $\rho=|f'|$ that also attains the maximum $\lambda_1\gamma=2\pi$. 
Thus, the maximization problem on the disk admits infinitely many maximizers, but these are not
generated by rotational symmetry. Instead, they arise from the rich family of conformal self-maps
of the unit disk and so have distinct profiles. In Figure \ref{fig:Disk max lambda1}, we plot $\rho$ for $a=0,0.1,0.3,$ and $0.5$ to illustrate
that a few different $\rho$ functions with distinct profiles which all have $\lambda_{1}\gamma=2\pi.$ 
\end{rem}

\begin{rem}\label{rem:convexityconclusion}
   Let $\rho_1$ be the density function with $a=0$ and $\rho_2$ the one with $a=0.5$, as defined in Remark~\ref{rem:distmaximizers}.
 Consider $\rho_t$ is the convex combination, i.e., $\rho_{t}=t\rho_{1}+(1-t)\rho_{2}$ for $0\le t\le1.$ In Figure \ref{fig:Nconv_lambda1_Nconvex_lambda2}, we observe that $\lambda_1(\rho_t)$ is not concave and $\lambda_2(\rho_t)$ is not convex anymore, comparing to Figure \ref{fig:sigma_1_8}. Moreover, we observe that $1/\lambda_1(\rho_t)$ is not convex $1/\lambda_2(\rho_t)$ is not concave, comparing to Figure \ref{fig:sigma_1_8}. We conclude that, in general, the functionals $\lambda_k(\rho)$ and ${1}/{\lambda_k(\rho)},$ $k\geq 1,$ are neither convex nor concave on the admissible set $\mathcal{M}$.
\end{rem}

\begin{figure}
	\centering
	\includegraphics[scale=1]{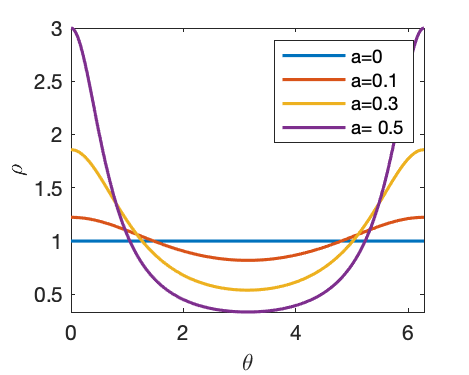}\caption{\label{fig:Disk max lambda1} The plots of several optimal $\rho$ for maximizing $\lambda_{1} \gamma$ on a disk {with $\alpha= 0.5. \beta= 1.5$ and $\gamma=2\pi.$}}	
\end{figure}

\begin{figure}
	\centering
	\includegraphics[scale=0.75]{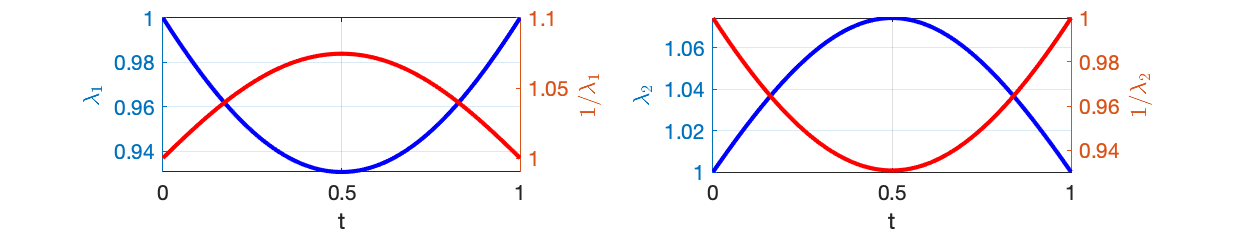}\caption{\label{fig:Nconv_lambda1_Nconvex_lambda2} The plots of first two eigenvalues of $\rho_{t}=t\rho_{1}+(1-t)\rho_{2}$ for $0\le t\le1$ where $\rho_1$ is the maximizer of $\lambda_1$ with $a=0$ and $\rho_2$ is the maximizer of $\lambda_1$ with $a=0.5$.}	
\end{figure}

Next, we reformulate the maximization problem to facilitate the derivation of optimality conditions. Instead of the original problem \eqref{eq:maxp}, we consider the modified objective
\begin{equation}\label{eq:maxpM}
   \max_{\rho \in \mathcal{M}} \Lambda_k(\rho), 
\end{equation}
where \( \Lambda_k(\rho) \) is defined as in  Lemma \ref{lem:weakcon}. This replacement is justified by the fact that \( \Lambda_k \) is Fréchet differentiable, which enables us to derive necessary optimality conditions and develop numerical methods. Moreover, when \( \lambda_k(\rho) \) is simple, we have \( \Lambda_k(\rho) = \lambda_k(\rho) \). Most importantly, It is easy to check that  any maximizer of the modified problem \eqref{eq:maxpM} is also a maximizer of the original problem \eqref{eq:maxp}.

Similar to the reasoning used for the minimization problem \eqref{eq:minp}, we can establish the following theorem, whose proof is omitted for brevity.
\begin{thm}\label{thm:OCmaxandexist}
Let $\check \rho$ be a maximizer of \eqref{eq:maxpM} and so  be a maximizer of \( \lambda_k(\rho) \) over \( \rho \in \mathcal{M} \), and assume that \( \lambda_k(\check{\rho}) \) has multiplicity \( l+1 \), where \( l \geq 0 \). Then
\[
\check{\rho}(\mathbf{x}) = 
\begin{cases} 
\beta, & \text{if } w(\mathbf{x}) < \tau, \\
\in [\alpha, \beta], & \text{if } w(\mathbf{x}) = \tau, \\
\alpha, & \text{if } w(\mathbf{x}) > \tau,
\end{cases}
\]
where \( w(\mathbf{x}) = \sum_{j=0}^{l} u_{k+j,\check{\rho}}^2(\mathbf{x}) \), and
$
\tau = \inf \left\{ s \in \mathbb{R} : \left| \left\{ \mathbf{x} \in \partial \Omega : w(\mathbf{x}) \leq s \right\} \right| \geq A \right\}.
$
\end{thm}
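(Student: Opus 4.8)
The plan is to mirror, with the obvious sign changes, the argument already used in the proof of Theorem~\ref{thm:OCmin}. The first observation is that a maximizer $\check\rho$ of $\lambda_k(\rho)\gamma$ over $\mathcal M$ is automatically a maximizer of the cluster sum $\Lambda_k(\rho)=\Lambda_{k,l}(\check\rho)=\sum_{j=0}^l\lambda_{k+j}(\rho)$: since $\lambda_{k}(\check\rho)=\cdots=\lambda_{k+l}(\check\rho)$ by the multiplicity hypothesis, we have $\Lambda_{k,l}(\check\rho)=(l+1)\lambda_k(\check\rho)$, and for any other $\rho\in\mathcal M$ the ordering of the spectrum gives $\lambda_{k+j}(\rho)\le\lambda_{k+l}(\rho)$ is not directly available, so instead one argues that $\lambda_k(\rho)\le\lambda_k(\check\rho)$ for all admissible $\rho$ together with the fact that $\Lambda_{k,l}(\rho)\le(l+1)\lambda_{k+l}(\rho)$ and, near $\check\rho$ where the eigenvalue branch stays clustered, $\Lambda_{k,l}$ is controlled by $(l+1)\lambda_k$; this is exactly the reasoning invoked (and claimed standard) in Theorem~\ref{thm:OCmin}, so I would simply cite it. Since $\Lambda_{k,l}$ is Fr\'echet differentiable on $\mathcal M$ viewed in $L^2(\partial\Omega)$ by Lemma~\ref{lem:weakcon}(iii) (in low dimension; in general one works with the G\^ateaux derivative of Lemma~\ref{lem:weakcon}(ii), which suffices for the variational inequality along segments), the maximizer satisfies the first-order condition on the convex set $\mathcal M$.

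Concretely, for every $\rho\in\mathcal M$ the segment $\rho_t=\check\rho+t(\rho-\check\rho)$ lies in $\mathcal M$ for $t\in[0,1]$ by convexity, and $t\mapsto\Lambda_{k,l}(\rho_t)$ has a maximum at $t=0$, hence its right derivative there is $\le 0$. By Lemma~\ref{lem:weakcon}(ii),
\[
\bigl(\Lambda'_{k,l}(\check\rho),\rho-\check\rho\bigr)
= -\,\lambda_k(\check\rho)\int_{\partial\Omega}(\rho-\check\rho)\,w\,dS\le 0
\qquad\text{for all }\rho\in\mathcal M,
\]
with $w=\sum_{j=0}^l u_{k+j,\check\rho}^2$. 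Since $\lambda_k(\check\rho)>0$, this says $\int_{\partial\Omega}(\rho-\check\rho)\,w\,dS\ge 0$ for all $\rho\in\mathcal M$, i.e. $\check\rho$ \emph{minimizes} $\rho\mapsto\int_{\partial\Omega}\rho\,w\,dS$ over $\mathcal M$. (Note the contrast with Theorem~\ref{thm:OCmin}, where minimizing $\lambda_k$ forced $\check\rho$ to \emph{maximize} this linear functional; here maximizing $\lambda_k$ flips it to a minimization.) Applying Lemma~\ref{lem:bathtublem}(ii) to the minimization problem with $f=w$ gives precisely the stated pointwise characterization: $\check\rho=\beta$ where $w<\tau$, $\check\rho=\alpha$ where $w>\tau$, and $\check\rho\in[\alpha,\beta]$ on the level set $\{w=\tau\}$, with the threshold $\tau=\inf\{s:\ |\{w\le s\}|\ge A\}$ supplied by the minimization branch of Lemma~\ref{lem:bathtublem}.

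The only genuinely delicate point is the reduction in the first paragraph: passing from ``$\check\rho$ maximizes $\lambda_k$'' to ``$\check\rho$ maximizes $\Lambda_{k,l}$''. When $l=0$ this is trivial since $\Lambda_{k,0}=\lambda_k$. For $l\ge 1$ one must rule out that perturbing $\check\rho$ splits the cluster in a way that raises $\sum_{j=0}^l\lambda_{k+j}$ while keeping $\lambda_k$ fixed or lower; this is handled exactly as in Theorem~\ref{thm:OCmin} (the eigenvalue branches $\lambda_{k+j}(\rho_t)$ are continuous and, for the purpose of the one-sided derivative test, it suffices that $\lambda_{k+j}(\rho_t)\le\lambda_{k+l}(\rho_t)$ and $\lambda_{k+l}(\rho_t)\to\lambda_k(\check\rho)$ as $t\to 0$, so any first-order increase of $\Lambda_{k,l}$ would force a first-order increase of some $\lambda_{k+j}\le\lambda_{k+l}$, contradicting maximality of $\lambda_k=\lambda_{k+l}$ at $t=0$). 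Since the excerpt states this theorem with ``proof omitted for brevity'', I would present only this outline and refer to the proof of Theorem~\ref{thm:OCmin} for the repeated details.
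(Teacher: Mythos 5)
Your second paragraph is exactly the paper's (omitted) argument: since $\check\rho$ maximizes the differentiable cluster sum $\Lambda_{k,l}$ over the convex set $\mathcal M$, the variational inequality $\bigl(\Lambda'_{k,l}(\check\rho),\rho-\check\rho\bigr)\le 0$ for all $\rho\in\mathcal M$ gives, via Lemma~\ref{lem:weakcon}(ii), $\int_{\partial\Omega}(\rho-\check\rho)\,w\,dS\ge 0$, so $\check\rho$ \emph{minimizes} $\rho\mapsto\int_{\partial\Omega}\rho\,w\,dS$ over $\mathcal M$, and the minimization branch of Lemma~\ref{lem:bathtublem}(ii) yields the stated pointwise rule with the stated $\tau$. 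That part is correct and is the same route as Theorem~\ref{thm:OCmin} with the sign reversed.

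The problem is your first and last paragraphs. You set out to prove that a maximizer of $\lambda_k(\rho)\gamma$ is automatically a maximizer of $\Lambda_{k,l}$, i.e.\ the implication from \eqref{eq:maxp} to \eqref{eq:maxpM}. That is not what the theorem requires: its hypothesis is that $\check\rho$ maximizes \eqref{eq:maxpM}, and the implication actually needed (the one the paper calls ``easy to check'') goes the other way --- it follows from $\Lambda_{k,l}(\rho)\ge(l+1)\lambda_k(\rho)$ for every $\rho$ together with $\Lambda_{k,l}(\check\rho)=(l+1)\lambda_k(\check\rho)$. The converse you attempt is precisely the direction in which the eigenvalue ordering works against you: for minimization the ordering gives the reduction for free (which is why Theorem~\ref{thm:OCmin} can be stated for minimizers of $\lambda_k$ itself), but for maximization it does not, which is exactly why the paper changes the hypothesis to \eqref{eq:maxpM}. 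Your sketch of that reduction is also not sound: a first-order increase of $\Lambda_{k,l}$ along $\rho_t$ could come entirely from the branches $\lambda_{k+j}$ with $j\ge1$ while $\lambda_k(\rho_t)$ stays at or below its maximum, so no contradiction with maximality of $\lambda_k$ arises. Drop that reduction, take the hypothesis as given, and your second paragraph stands as a complete proof.
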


\section{Numerical Algorithms and Results}\label{sec:numeric}
This section is devoted to solving the optimization problems \eqref{eq:minp}–\eqref{eq:maxp} numerically. At first, we need a numerical method to discretize the elliptic eigenvalue problem \eqref{eq:mainpde}.

To solve the weighted Steklov eigenvalue problem \eqref{eq:mainpde} with given $\rho$, we use the method of particular solutions which was proposed in \cite{oudet2021computation,schroeder2023steklov}. When $\Omega$ is a simply connected domain, the approach is particularly simple. A harmonic Steklov eigenfunction $u$  can be approximated by a linear combination of harmonic functions in polar coordinates. We take
\[
\mathcal{B}
=\bigl\{\,\phi_1,\dots,\phi_{2k_0+1}\,\bigr\}
=\Bigl\{\,1,\ r^1\cos\theta,\ r^1\sin\theta,\ \dots,\ r^{k_0}\cos(k_0\theta),\ r^{k_0}\sin(k_0\theta)\Bigr\},
\]
and approximate $u$ by 
\[
u \approx\sum_{m=1}^{2k_0+1} V_m\,\phi_m,
\]
where $k_0$ is a chosen integer. We denote 
$(p_l)_{1 \leq l \leq L}$ as a uniform sampling with respect to arc 
length of $\partial \Omega$ with the step size $ds$. We approximate solutions 
of eigenvalue problem $(\lambda,u)$ by the solution $(\sigma_d,V)$ of the discrete generalized eigenvalue problem
\begin{equation} \label{e:mpseig}
B^T D_1 A  \ V =  \sigma_d  \ B^T D_2 B \  V, 
\end{equation}
where
\[
B=\bigl(\phi_m(p_l)\bigr)_{1\le l\le L,\ 1\le m\le 2k_0+1},
\qquad
A=\Bigl(\tfrac{\partial \phi_m}{\partial\mathbf n}(p_l)\Bigr)_{1\le l\le L,\ 1\le m\le 2k_0+1},
\]
and the diagonal  matrices
\[
D_1=\operatorname{diag}(ds,\dots,ds),\qquad
D_2=\operatorname{diag}\bigl(\rho(p_1)ds,\dots,\rho(p_L)ds\bigr).
\]

  Note that both $B^T D_1 A$ and $B^T D_2 B$ are both symmetric here. In the numerical implementation, we choose $k_0=18$ and $L = 720$.

As in \cite{Akhmetgaliyev2016,oudet2021computation}, to handle multiple eigenvalues, we transform $\min \lambda_{k} $ to $\min t$ with $t\ge \lambda_i$ for $i=1:k$ and  $\max \lambda_{k}$ to $\max t$ 
with $t\le \lambda_{i}$ for $i=k,k+1,\cdots, k+j-1$, respectively. A choice of small $j$ would work as long as it is greater or equal to the multiplicity of eigenvalue. In our simulation, $j=5$ is enough. The optimization is done by using fminimax in MATLAB with the gradient of eigenvalues provided.  This ensures the efficient and accurate calculation of the optimizers. 

In our numerical examples, we have considered $\Omega$ as the unit circle.
In Figure \ref{fig:Disk min}, the minimizers of $\lambda_{k}\gamma$ with $\alpha = 0.5$, $\beta = 1.5$, and $\gamma = 2\pi $ for $k=1:5$ are shown. The minimizers are bang-bang functions. The value of $\rho$ either takes $\alpha$ or $\beta$. When minimizer $k$-th Steklov eigenvalues, the region that $\rho = \beta$ consists of $(k+1)$ uniformly-distributed disconnected intervals. When $k$ is odd, the eigenvalue is simple while when $k$ is even, the eigenvalue has multiplicity two. We plot the corresponding eigenfunctions and the sum of their squares, $\sum u_{k,\rho}^2$, to verify the optimality conditions.  
In Figure \ref{fig:Disk min 2}, the minimizers of $\lambda_{k}\gamma$ with $\alpha = 0.25$, $\beta = 4$, and $\gamma = 2\pi $ for $k=1:5$ are shown. Again, the minimizers of  $\lambda_{k}\gamma$ are periodic bang-bang functions and high-density intervals $\{x|\rho(x) = \beta\}$ are disconnected and uniformly distributed along the boundary.

\begin{figure}
	\centering
	\includegraphics[scale=0.75]{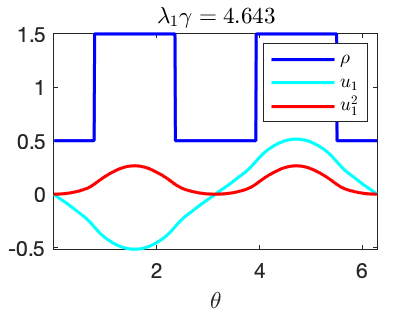}\includegraphics[scale=0.75]{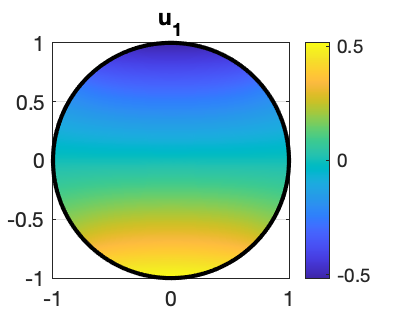}\\
    \includegraphics[scale=0.75]{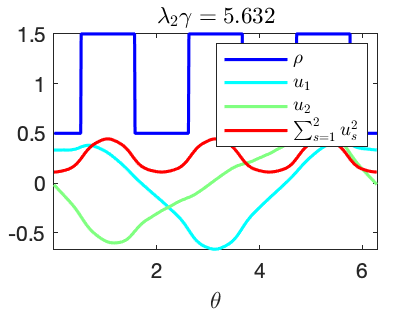}\includegraphics[scale=0.75]{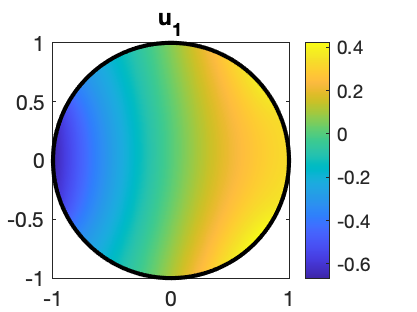}\includegraphics[scale=0.75]{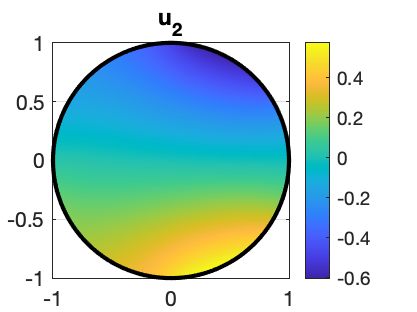}\\
 \includegraphics[scale=0.75]{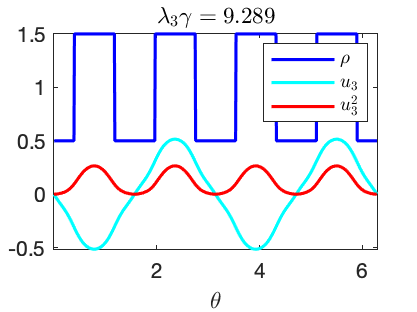}\includegraphics[scale=0.75]{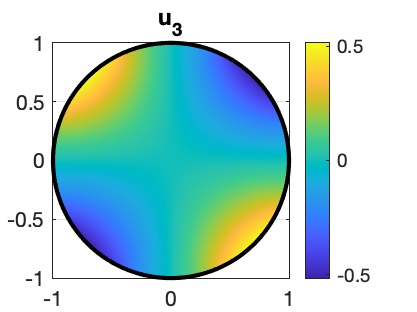}\\
    \includegraphics[scale=0.75]{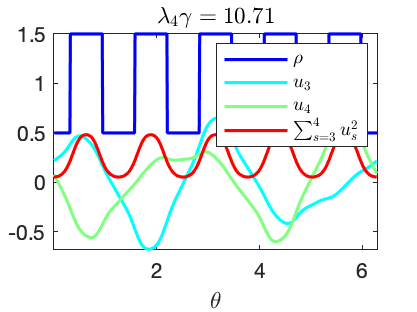}\includegraphics[scale=0.75]{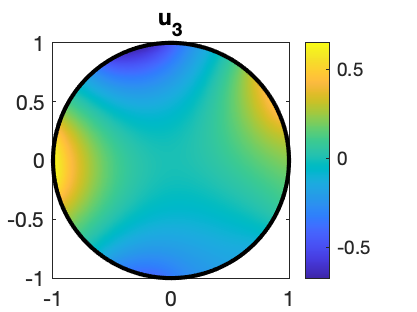}\includegraphics[scale=0.75]{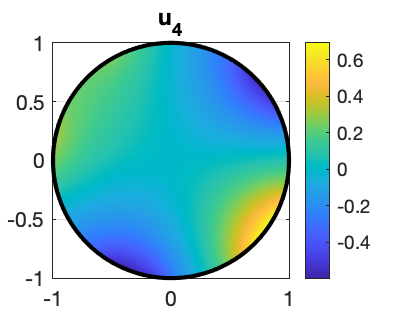}\\    
    \includegraphics[scale=0.75]{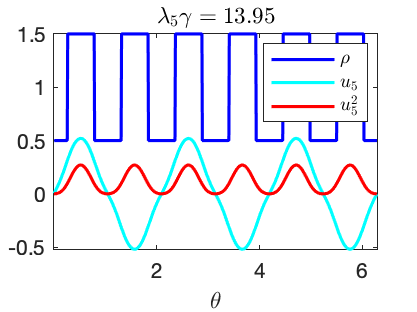}\includegraphics[scale=0.75]{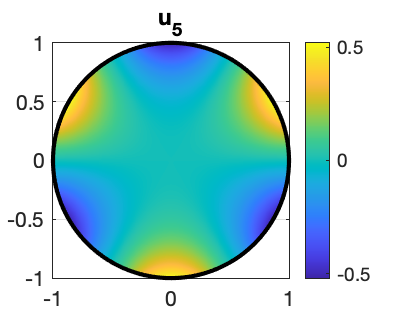}\\
    \caption{\label{fig:Disk min} The plots of optimal $\rho$ for minimizing $\lambda_{k} \gamma$ on a disk for $k=1:5$ with $\alpha = 0.5, \beta = 1.5$, and $\gamma = 2\pi.$  The corresponding eigenfunctions and the sum of their squares  are plotted to check the optimality conditions.}	
\end{figure}

\begin{figure}
	\centering
	\includegraphics[scale=0.75]{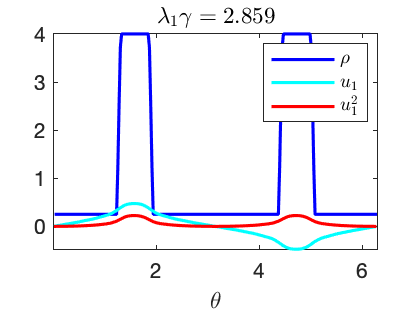}\includegraphics[scale=0.75]{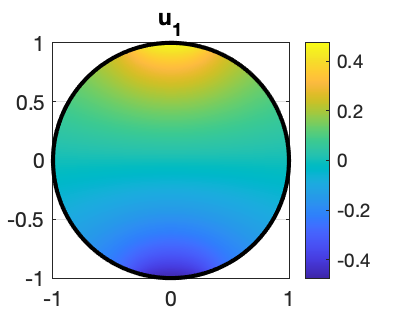}\\
    \includegraphics[scale=0.75]{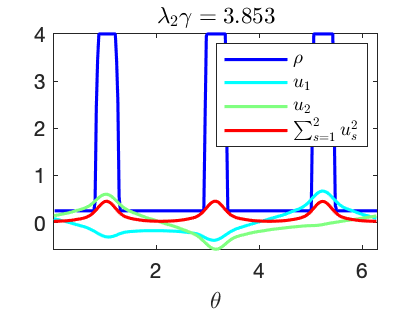}\includegraphics[scale=0.75]{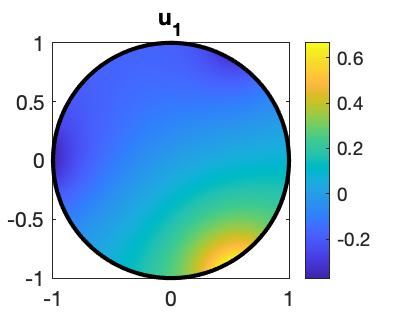}\includegraphics[scale=0.75]{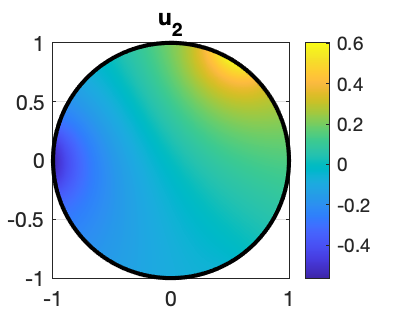}\\
 \includegraphics[scale=0.75]{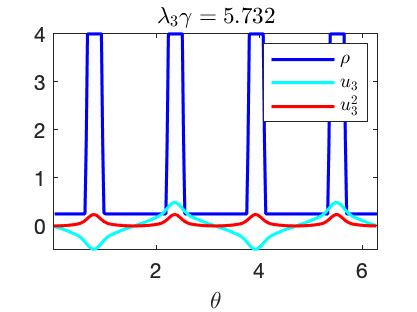}\includegraphics[scale=0.75]{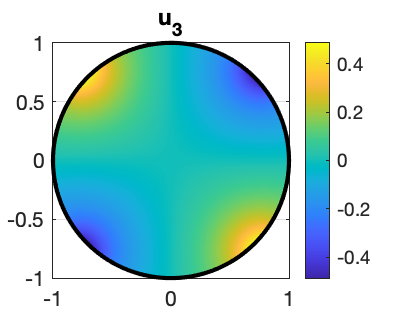}\\
    \includegraphics[scale=0.75]{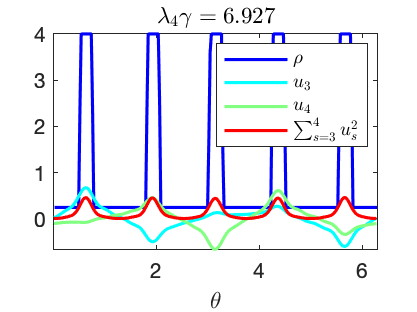}\includegraphics[scale=0.75]{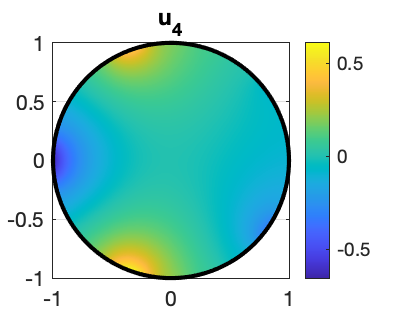}\includegraphics[scale=0.75]{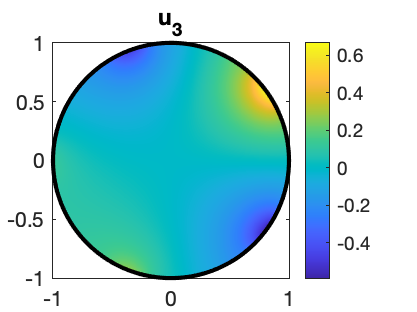}\\    
    \includegraphics[scale=0.75]{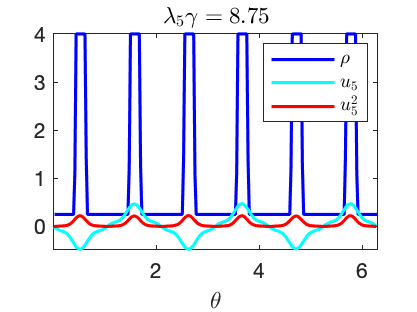}\includegraphics[scale=0.75]{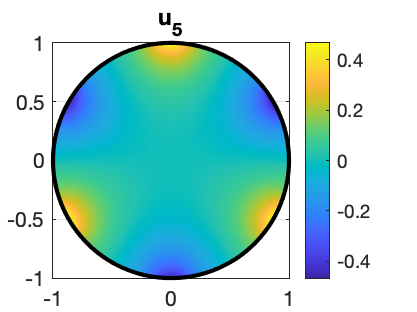}\\
    \caption{\label{fig:Disk min 2} The plots of optimal $\rho$ for minimizing $\lambda_{k} \gamma$ on a disk for $k=1:5$ with $\alpha = 0.25, \beta = 4$, and $\gamma = 2\pi.$  The corresponding eigenfunctions and the sum of their squares  are plotted to check the optimality conditions.}	
\end{figure}

\begin{figure}
	\centering
	\includegraphics[scale=0.75]{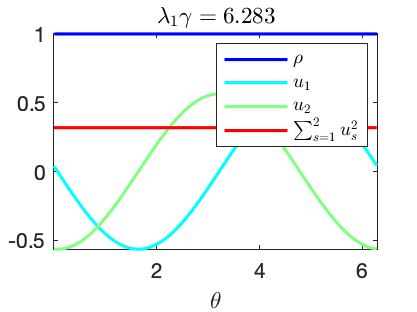}\includegraphics[scale=0.75]{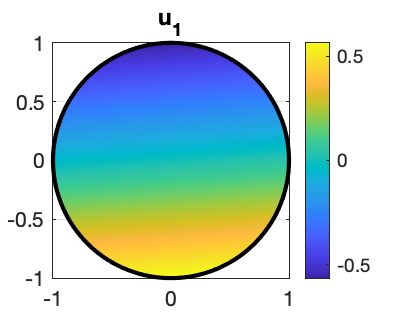}\includegraphics[scale=0.75]{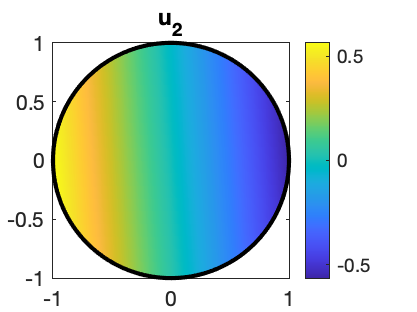}\\
  	\includegraphics[scale=0.75]{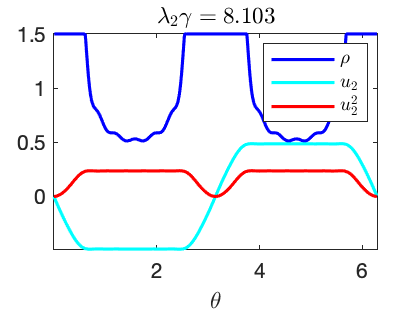}\includegraphics[scale=0.75]{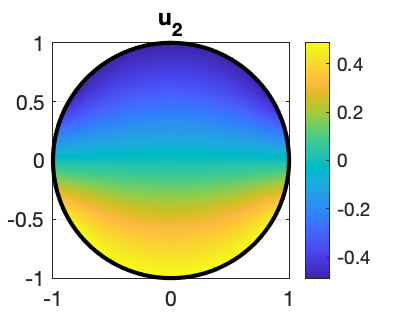}\\
\includegraphics[scale=0.75]{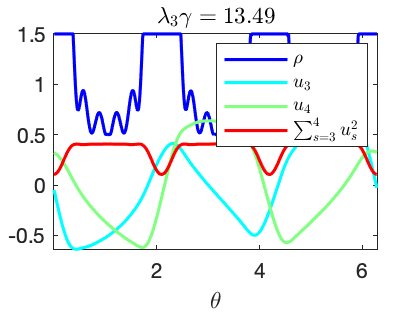}\includegraphics[scale=0.75]{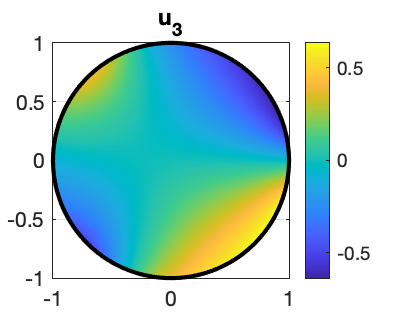}\includegraphics[scale=0.75]{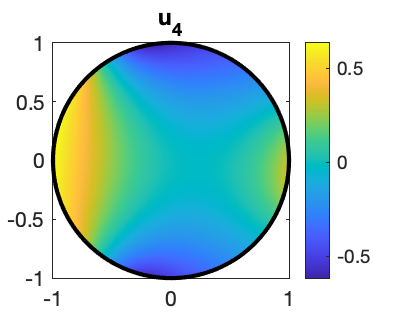}\\  
  \caption{\label{fig:Disk max} The plots of optimal $\rho$ for maximizing $\lambda_{k} \gamma$ on a disk for $k=1:3$ with $\alpha = 0.5, \beta = 1.5$, and $\gamma = 2\pi.$ The corresponding eigenfunctions and the sum of their squares are plotted to check the optimality conditions.}	
\end{figure}

In Figure \ref{fig:Disk max}, we show the maximizing results for $k=1:3$. The maximal value of $\lambda_1 \gamma$ is achieved by a constant density $\rho(\mathbf{x}) \equiv 1$. For higher eigenvalues, the optimizers are oscillatory so we only show the optimizers for maximizing $\lambda_2$ and $\lambda_3$ that the oscillations are resolved by the number of grids. We expect this oscillatory behavior. It is known that maximizing unweighted Steklov eigenvalues $\lambda_k(\rho(\mathbf{x}) \equiv 1)$ 
with respect to a simply connected domain $\Omega$ are  by the degenerating $k$-kissing disks. If one considers the conformal mapping from the $k$-kissing disks to a disk, the conformal mapping will lead to a weighted Steklov eigenvalue problem with an oscillatory weight function $\rho$. For some $\mathbf{x}$, $\rho(\mathbf{x})$ could approach to zero or infinity. With our box constraints, our optimal eigenvalues $\lambda_k \gamma$ will be bounded above by $2k\pi$. In Figure \ref{fig:Disk max 1}, we show two nonconstant maximizers $\rho$ for $\lambda_1 \gamma$ obtained numerically using our algorithm with different $\alpha$ and $\beta$. Different optimal $\rho$ can be obtained with different initial guesses for $\rho$. In our simulations, we choose $1+0.2 \cos (\theta)$.  

\begin{figure}
	\centering
	\includegraphics[scale=0.75]{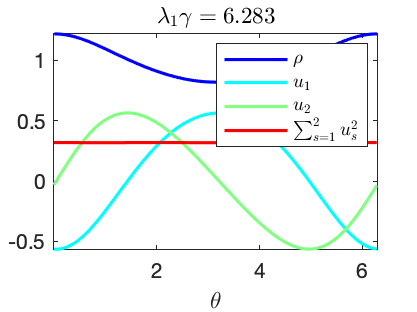}\includegraphics[scale=0.75]{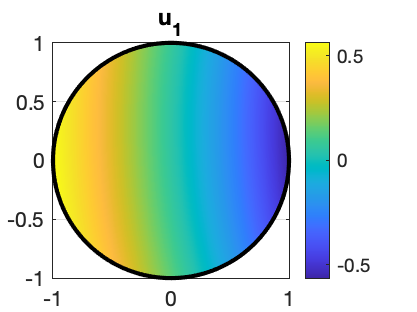}\includegraphics[scale=0.75]{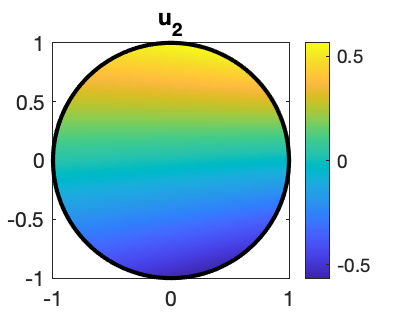}\\
    \includegraphics[scale=0.75]{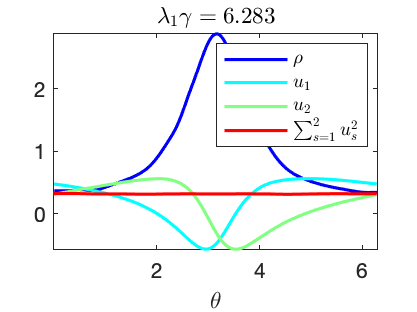}\includegraphics[scale=0.75]{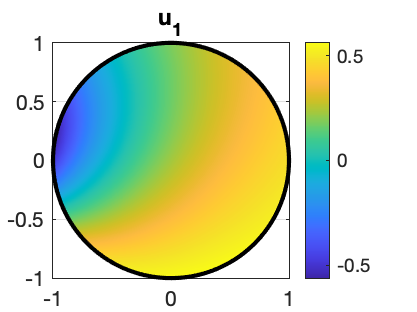}\includegraphics[scale=0.75]{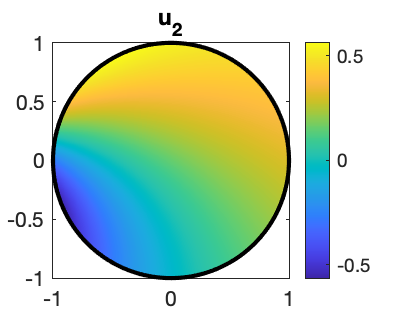}\\
  	\caption{\label{fig:Disk max 1} (a) The nonconstant maximizer of $\lambda_{1} \gamma$ on a disk for $\alpha = 0.5, \beta = 1.5$, and $\gamma = 2\pi.$ (b) The nonconstant maximizer of $\lambda_{1} \gamma$ on a disk for $\alpha = 0.25, \beta = 4$, and $\gamma = 2\pi.$}	
\end{figure}

The numerical optimizers satisfy the optimality conditions derived in
Theorems~\ref{thm:OCmin} and~\ref{thm:OCmaxandexist}. In the minimization problem the optimal density assigns its larger value to regions where the associated  function $w(\mathbf{x})$ is larger, whereas in the
maximization problem the larger density is placed where $w(\mathbf{x})$ is smaller. This
behavior is clearly visible in Figures~\ref{fig:Disk min}-\ref{fig:Disk max 1}.

\begin{rem}[Consistency of numerical results with the analysis]
The numerical experiments confirm our analytical findings. In the minimization problem the optimizer is of bang--bang type, in agreement with Theorem~\ref{thm:existmin}. Moreover, for both minimization and maximization the computed optimizers satisfy the  optimality conditions obtained in Theorems \ref{thm:OCmin}-\ref{thm:OCmaxandexist} expected of an optimizer. In the minimization case we also observe that the set where the optimizer equals $\beta$ is disconnected, with at least $m$ connected components corresponding to the number of zeros of the associated eigenfunctions, see Corollary \ref{cor:disconnectedOptimSet}. For the maximization problem with $k=1$, the optimizer is the constant function, in accordance with Theorem~\ref{thm:MaxExistAnalSol}. Finally, in this case we also have several  maximizers with different profiles, which is consistent with Remark~\ref{rem:distmaximizers}.
\end{rem}
\begin{rem}[Limitations of the numerical method]
While the method of particular solutions provides an efficient tool for the disk and other simply connected planar domains, it also has limitations. The approach relies on explicit harmonic basis functions in polar coordinates, which restricts its applicability to domains where such expansions are convenient. For more general geometries, constructing appropriate basis functions or ensuring accurate sampling along $\partial \Omega$ becomes more involved, and the conditioning of the matrices $A$ and $B$ may deteriorate. In higher dimensions, the method could be less practical, since the analogue of the Fourier--harmonic basis may lead to large-scale, ill-conditioned systems. For such cases, finite element or boundary element methods may provide more robust alternatives, albeit at a higher computational cost.
\end{rem}
\begin{rem}
The minimization and maximization problems considered in this paper may admit
multiple global optimizers. Such non-uniqueness is intrinsic and is common in
optimization problems for eigenvalues and optimal shapes, see for example \cite{henrot2006extremum}. It may arise from
different mechanisms: on symmetric domains such as the disk, rotational invariance
implies that any rotation of an optimizer is again optimal, while more generally the
threshold structure induced by the optimality conditions can lead to  densities attaining the same extremal value. The numerical algorithm is
therefore not intended to select a distinguished optimizer, but converges to one
admissible solution satisfying the optimality conditions; different initializations
may lead to different, but equally optimal, density profiles.
\end{rem}

\section{Conclusion}\label{sec:conc}

In this work we investigated the extremal behaviour of Steklov eigenvalues with respect to boundary densities under pointwise and integral constraints. We established existence of both minimizers and maximizers and provided structural results: minimizers are bang--bang functions and may have disconnected supports, while maximizers need not be bang--bang. A particular novelty arises in the maximization problem on circular domains, where we showed that infinitely many maximizers exist but are not generated by symmetry; instead, they originate from the conformal automorphisms of the disk. This stands in contrast to the minimization problem, where non-uniqueness is purely symmetry--induced.  

From an analytical standpoint, we demonstrated that the eigenvalue maps $\rho \mapsto \lambda_k(\rho), 1/\lambda_k(\rho)$ are neither convex nor concave in general, clarifying limitations of convex optimization tools and highlighting the need for problem-specific methods. To this end, we introduced a surrogate Fr\'echet differentiable functional which allowed us to derive optimality conditions in both minimization and maximization settings. Building on this framework, we proposed efficient numerical algorithms and explored a variety of examples. The numerical results reveal both the promise and the difficulty of computing optimal densities, particularly when the optimizer lacks smoothness or exhibits a disconnected structure.  

Our study contributes to the broader field of spectral optimization by opening up new perspectives on Steklov-type problems beyond the classical focus on shape optimization. Several directions remain for future work. These include developing numerical methods to solve the optimization problems in higher dimensions and complex geometries, studying the topological and geometric properties of the optimizers, and designing more robust schemes for non-smooth densities. Another promising line of research lies in connecting the optimization of Steklov eigenvalues with related inverse problems and applications in biomedical imaging and physics, where boundary effects play a dominant role.

\section*{Appendix A: Eigenvalues of Dirichlet Laplacian} \label{App:eigenvalue}
Consider the \( k \)th eigenvalue of the weighted Dirichlet problem:
\begin{equation}\label{eq:pdedirichlet}
	-\Delta u = \mu\, \rho\, u, \quad \text{in } \Omega, \quad u = 0 \quad \text{on } \partial \Omega,
\end{equation}
where \( \rho \) is a non-negative bounded function and \( \mu_k(\rho) \) denotes the \( k \)th eigenvalue. In \cite[ Theorem 9.1.3]{henrot2006extremum}, it is claimed that the map \( \rho \mapsto 1/\mu_k(\rho) \) is convex for all $k \geq 1$. However, upon closer inspection, the reasoning applies only to the principal eigenvalue \( 1/\mu_1(\rho) \), and does not extend to higher eigenvalues. 

In what follows, we present a one-dimensional example that demonstrates the failure of convexity (and concavity) for the Dirichlet eigenvalues \( \mu_k(\rho) \) and also $1/\mu_k(\rho)$.

 Consider 
\[
-u^{\prime\prime}(x)=\mu\rho(x) u(x),\quad x\in(0,1),\quad u(0)=u(1)=0.
\]
Denote
\[
\rho_{1}(x)=\begin{cases}
	\beta & x<\frac{1}{2},\\
	\alpha & x\ge\frac{1}{2},
\end{cases}\quad\text{and}\quad\rho_{2}=\begin{cases}
	\alpha & x<\frac{1}{2},\\
	\beta & x\ge\frac{1}{2},
\end{cases}
\]
and $\rho_{t}=t\rho_{1}+(1-t)\rho_{2}$ for $0\le t\le1.$ We have
\[
\rho_{t}(x)=\begin{cases}
	\rho_{L}:=\alpha+t(\beta-\alpha) & x<\frac{1}{2},\\
	\rho_{R}:=\beta-t(\beta-\alpha) & x\ge\frac{1}{2},
\end{cases}
\]
The solution $u$ with $\rho=\rho_{t}$ is given by 
\[
u(x)=\begin{cases}
	c_{1}\sin(\sqrt{\mu\rho_{L}}x) & x<\frac{1}{2},\\
	c_{2}\sin(\sqrt{\mu\rho_{R}}(1-x)) & x\ge\frac{1}{2}.
\end{cases}
\]
The eigenvalue is determined by the continuity conditions of $u$
and $u^{\prime}$ at $\frac{1}{2}$ which leads to 
\begin{align*}
&\det\left[\begin{array}{cc}
	\sin\left(\frac{1}{2}\sqrt{\mu\rho_{L}}\right) & -\sin\left(\frac{1}{2}\sqrt{\mu\rho_{R}}\right)\\
	\sqrt{\mu\rho_{L}}\cos\left(\frac{1}{2}\sqrt{\mu\rho_{L}}\right) & \sqrt{\mu\rho_{R}}\cos\left(\frac{1}{2}\sqrt{\mu\rho_{R}}\right)
\end{array}\right] \\
&= \sqrt{\mu\rho_{R}}\sin\left(\frac{1}{2}\sqrt{\mu\rho_{L}}\right)\cos\left(\frac{1}{2}\sqrt{\mu\rho_{R}}\right)+\sqrt{\mu\rho_{L}}\sin\left(\frac{1}{2}\sqrt{\mu\rho_{L}}\right)\cos\left(\frac{1}{2}\sqrt{\mu\rho_{R}}\right)=0.
\end{align*}
Roots  of this equation provide the eigenvalues corresponding to $\rho_t$. Setting $\alpha = 1$ and $\beta =2$,
Figure \ref{fig:The-plots-of first four eigenvalues} shows the plot of the first four eigenvalues. We can clearly see that only $\mu_{1}(\rho_{t})$ shows concavity, while $\mu_3(\rho_{t})$ and $\mu_4(\rho_{t})$ are neither convex nor concave. Correspondingly, $1/ \mu_{1}(\rho_{t})$ shows convexity, while $1/\mu_3(\rho_{t})$ and $1/\mu_4(\rho_{t})$ are neither convex nor concave. Even though $\mu_2$ seems convex along the path $\rho_t$, we can construct an example that this does not hold true.   
Denote
\[
\rho_{3}(x)=\begin{cases}
	\beta & |x-\frac{1}{2}|<\frac{1}{4},\\
	\alpha & |x-\frac{1}{2}|\ge\frac{1}{4},
\end{cases}\quad\text{and}\quad\rho_{4}(x)=\begin{cases}
	\alpha & |x-\frac{1}{2}|<\frac{1}{4},\\
	\beta & |x-\frac{1}{2}|\ge\frac{1}{4},
\end{cases}
\]
and define $\varrho(t) = t\rho_{3}+(1-t)\rho_{4}$ for $0\le t\le1.$ The eigenvalues $\mu_1$ and $\mu_2$ are shown in Figure \ref{f:D1_lam222}. Along the path $\varrho_t$, $\mu_1$ is no longer concave and $\mu_2$ is no longer convex. Thus we conclude $\mu_1$ and $\mu_2$ are neither convex nor concave. However, $1/\mu_1$ remains convex.

\begin{figure}
	\centering
	\includegraphics[scale=0.7]{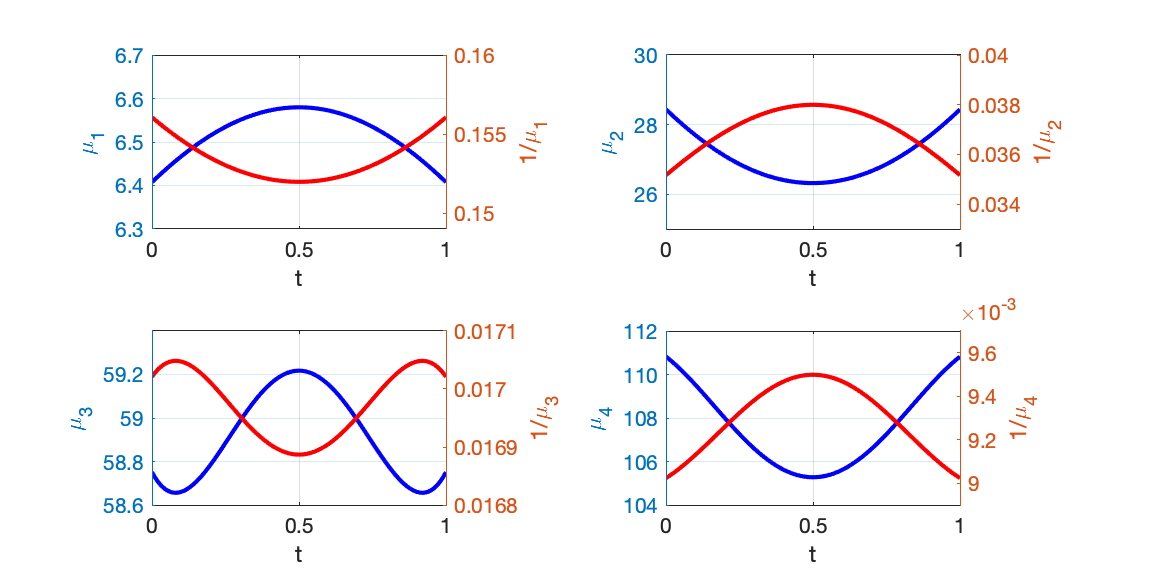}		\caption{\label{fig:The-plots-of first four eigenvalues}The plots of $\mu_{k}(\rho_t)$ and $1/\mu_{k}(\rho_t)$ for $k=1\cdots4$, respectively.
		 While $\mu_{1}(\rho_t)$ is concave or $1/\mu_{1}(\rho_t)$ is convex,  $\mu_2(\rho_t)$ is convex, $\mu_3(\rho_t)$ and $\mu_4(\rho_t)$ are neither convex nor concave.}	
\end{figure}

\begin{figure}
	\centering
	\includegraphics[scale=0.65]{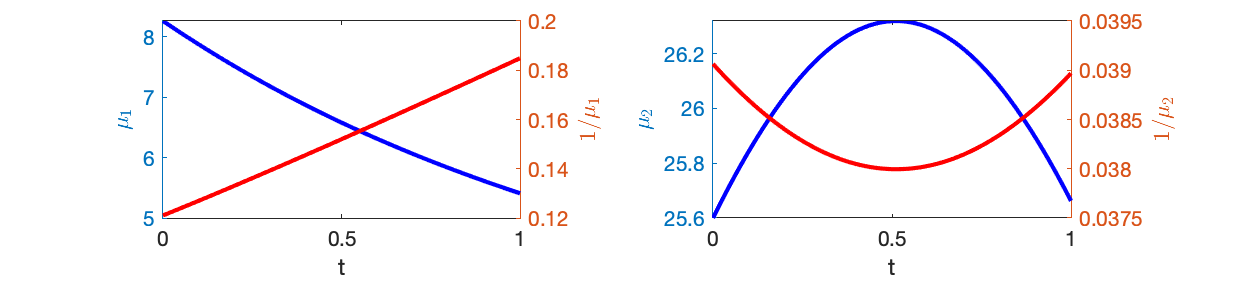}		\caption{\label{f:D1_lam222}The plots of $\mu_{k}(\varrho_t)$ and $1/\mu_{k}(\varrho_t)$ for $k=1$ and $k=2$, respectively. Together with Figure \ref{fig:The-plots-of first four eigenvalues}, we conclude that $\mu_1$ and $\mu_2$ are neither convex nor concave.}
\end{figure}

\section*{Appendix B: Proof of Lemma~\ref{lem:bathtublem} (minimization case)} \label{App:bathtub}

\begin{proof}
We present the proof only for the minimization problem; the maximization case follows by the same argument  and is omitted for brevity.

\medskip
\noindent
\textit{Step 1: Reduction to an equivalent problem.}
For $\rho\in\mathcal{M}$, set
\[
\sigma:=\frac{\rho-\alpha}{\beta-\alpha} \in [0,1] \quad \text{a.e. on } \partial\Omega,
\]
so that $\int_{\partial\Omega} \sigma\,dS = A$. Conversely, any $\sigma$ with these properties yields an admissible $\rho$.
The cost functional becomes
\[
\int_{\partial\Omega} \rho f\,dS 
= \alpha\int_{\partial\Omega} f\,dS + (\beta-\alpha)\int_{\partial\Omega} \sigma f\,dS.
\]
Since the first term is constant in $\sigma$, the problem reduces to
\begin{equation}\label{eq:min-sigma}
\min \left\{ \int_{\partial\Omega} \sigma f\,dS : \ 0\le \sigma\le 1 \ \text{a.e.}, \ \int_{\partial\Omega} \sigma\,dS = A \right\}.
\end{equation}

\medskip
\noindent
\textit{(i) Existence.}
The feasible set in \eqref{eq:min-sigma} is convex, closed, and bounded in $L^\infty(\partial\Omega)$, hence weak-* compact by the Banach–Alaoglu theorem. 
The map $\sigma \mapsto \int \sigma f$ is weak-* continuous because $f \in L^1(\partial\Omega)$.
Thus a minimizer $\sigma^\star$ exists, and so does $\rho^\star = \alpha + (\beta-\alpha)\sigma^\star \in \mathcal{M}$.

\medskip
\noindent
\textit{(ii) Pointwise characterization.}
We claim that $\bar\rho = \alpha+(\beta-\alpha)\bar\sigma$ is a minimizer if and only if
\[
\bar\sigma(\mathbf{x}) =
\begin{cases}
1, & f(\mathbf{x})<\tau,\\
\in[0,1], & f(\mathbf{x})=\tau,\\
0, & f(\mathbf{x})>\tau,
\end{cases}
\quad\text{i.e.}\quad
\bar\rho(\mathbf{x}) =
\begin{cases}
\beta, & f(\mathbf{x})<\tau,\\
\in[\alpha,\beta], & f(\mathbf{x})=\tau,\\
\alpha, & f(\mathbf{x})>\tau.
\end{cases}
\]

\emph{Necessity.}
Let $\bar\rho\in\mathcal M$ be a minimizer. Set
\[
L:=\{x\in\partial\Omega:\ f(\mathbf{x})<\tau\},\qquad
E:=\{x\in\partial\Omega:\ f(\mathbf{x})=\tau\},\qquad
G:=\{x\in\partial\Omega:\ f(\mathbf{x})>\tau\}.
\]

\smallskip
We show that $\bar\rho=\alpha$ a.e.\ on $G$.
Assume, towards a contradiction, that there exists a measurable $H\subset G$ with $|H|>0$ and $\bar\rho>\alpha$ on $H$. 
We show there is a measurable $F\subset L\cup E$ with $|F|>0$ and $\bar\rho<\beta$ on $F$. 
If not, then $\bar\rho=\beta$ a.e.\ on $L\cup E$, hence
\[
\int_{\partial\Omega}\bar\rho\,dS
=\int_{G}\bar\rho\,dS+\int_{L\cup E}\bar\rho\,dS
>\alpha\,|G|+\beta\,|L\cup E|
=\alpha|\partial\Omega|+(\beta-\alpha)|L\cup E|
\ge \gamma,
\]
since $|L\cup E|\ge A=(\gamma-\alpha|\partial\Omega|)/(\beta-\alpha)$; this contradicts $\int\bar\rho=\gamma$. 
Thus such an $F$ exists.

Define $\tilde\rho$ by
\[
\tilde\rho(\mathbf{x})=
\begin{cases}
\alpha, & \mathbf{x}\in H,\\
\beta, & \mathbf{x}\in F,\\
\bar\rho(\mathbf{x}), & \text{otherwise},
\end{cases}
\]
with $F$ chosen so that the mass is preserved:
\begin{equation}\label{eq:mass-balance}
\int_{\partial\Omega}(\bar\rho-\tilde\rho)\,dS
=\int_H(\bar\rho-\alpha)\,dS+\int_F(\bar\rho-\beta)\,dS
=0,
\end{equation}
which is possible because $\bar\rho-\alpha>0$ on $H$ and $\bar\rho-\beta<0$ on $F$.
Then $\tilde\rho\in\mathcal M$.

Compute the cost difference:
\[
\int_{\partial\Omega}(\bar\rho-\tilde\rho)\,f\,dS
=\int_H(\bar\rho-\alpha)f\,dS+\int_F(\bar\rho-\beta)f\,dS.
\]
On $H\subset G$, $f>\tau$, hence $(\bar\rho-\alpha)f>(\bar\rho-\alpha)\tau$. 
On $F\subset L\cup E$, $f\le\tau$ and $(\bar\rho-\beta)\le 0$, thus $(\bar\rho-\beta)f\ge(\bar\rho-\beta)\tau$. Therefore,
\[
\int_{\partial\Omega}(\bar\rho-\tilde\rho)\,f\,dS
>\tau\!\int_H(\bar\rho-\alpha)\,dS+\tau\!\int_F(\bar\rho-\beta)\,dS
=\tau\!\int_{\partial\Omega}(\bar\rho-\tilde\rho)\,dS
=0,
\]
by \eqref{eq:mass-balance}. Hence
\[
\int_{\partial\Omega}\tilde\rho\,f\,dS
<\int_{\partial\Omega}\bar\rho\,f\,dS,
\]
contradicting the optimality of $\bar\rho$. Thus $\bar\rho=\alpha$ a.e.\ on $G$.

\smallskip
Similarly, one can show that $\bar\rho=\beta$ a.e.\ on $L$; we omit the proof for brevity.
Therefore,
this yields the pointwise rule in Lemma~\ref{lem:bathtublem}(ii).

\emph{Sufficiency.}
Let $\bar\sigma$ satisfy the pointwise rule from (ii) and the mass constraint $\int_{\partial\Omega}\bar\sigma\,dS=A$.
For any feasible $\sigma$, we have
\[
\int_{\partial\Omega}(\sigma-\bar\sigma)f\,dS
\;\ge\;
\tau\!\left[\int_{L}(\sigma-1)\,dS + \int_{E}(\sigma-\bar\sigma)\,dS + \int_{G}\sigma\,dS\right]
=\tau\!\int_{\partial\Omega}(\sigma-\bar\sigma)\,dS
=\tau\,(A-A)=0,
\]
which implies $\int_{\partial\Omega}\sigma f\,dS \ge \int_{\partial\Omega}\bar\sigma f\,dS$.
Hence $\bar\sigma$ minimizes the cost.

\medskip
\noindent
\textit{(iii) Existence of a bang–bang minimizer.}
By the definition of $\tau$,
\[
|\{f<\tau\}|\ \le\ A\ \le\ |\{f\le\tau\}|.
\]
Thus there exists a measurable set $E$ such that
\[
\{f<\tau\} \subset E \subset \{f\le\tau\}, \quad |E|=A.
\]
Let $\bar\sigma = \chi_E$ and $\bar\rho = \alpha + (\beta-\alpha)\chi_E$. Then $\bar\sigma$ satisfies the pointwise rule in (ii), hence $\bar\rho$ is a minimizer of bang–bang form.

\medskip
\noindent
\textit{(iv) Uniqueness implies full level set.}
If the minimizer is unique, then necessarily $|\{f=\tau\}|=0$ or the mass constraint forces $E$ to contain all of $\{f=\tau\}$. Otherwise, different choices of $E$ on $\{f=\tau\}$ would yield distinct minimizers of the same cost. Therefore
\[
E = \{ f \le \tau \} \quad\text{(up to a null set)},
\]
and the unique minimizer is $\bar\rho = \alpha + (\beta-\alpha)\chi_{\{f\le\tau\}}$.

\smallskip
This completes the proof of Lemma~\ref{lem:bathtublem} for the minimization case.
\end{proof}

\section*{Statements \& Declarations}
\textbf{Funding}: Chiu Yen Kao acknowledges support from NSF DMS-2208373 and DMS-2513176.\\
\textbf{Conflict of interests}: The authors declare that there are no conflicts of interest regarding the publication of this paper.\\
\textbf{Data availability}:  Data will be made available on request.\\



\bibliographystyle{plain}
\bibliography{Steklov_Kao_Mohammadi_ref}

\end{document}